\documentclass [11pt]{amsart}
\usepackage{amsthm, amsmath, amssymb,latexsym}
\usepackage[usenames,dvipsnames]{color}
\usepackage[all]{xy}

\setlength{\textheight}{240mm}
\setlength{\textwidth}{140mm}
\setlength{\oddsidemargin}{10mm}
\setlength{\topmargin}{-2cm}

\newtheorem{theorem}{Theorem}[section]
\newtheorem{lemma}[theorem]{Lemma}
\newtheorem{corollary}[theorem]{Corollary}
\newtheorem{proposition}[theorem]{Proposition}
\newtheorem{remark}[theorem]{Remark}

 
\newcommand{\nc}{\newcommand} 
\nc{\cH}{{\mathcal H}}
\nc{\cA}{{\mathcal A}}
\nc{\cG}{{\mathcal G}}
\nc{\cC}{{\mathcal C}}
\nc{\cO}{{\mathcal O}}
\nc{\cI}{{\mathcal I}}
\nc{\cB}{{\mathcal B}}
\nc{\cY}{{\mathcal Y}}
\nc{\cK}{{\mathcal K}} 
\nc{\cX}{{\mathcal X}}
\nc{\cS}{{\mathcal S}}
\nc{\cE}{{\mathcal E}}
\nc{\cF}{{\mathcal F}}
\nc{\cZ}{{\mathcal Z}}
\nc{\cQ}{{\mathcal Q}}
\nc{\cN}{{\mathcal N}}
\nc{\cP}{{\mathcal P}}
\nc{\cL}{{\mathcal L}}
\nc{\cM}{{\mathcal M}}
\nc{\cT}{{\mathcal T}}
\nc{\cW}{{\mathcal W}}
\nc{\cU}{{\mathcal U}}
\nc{\cJ}{{\mathcal J}}
\nc{\cV}{{\mathcal V}}
\nc{\bH}{{\mathbb H}}
\nc{\bA}{{\mathbb A}}
\nc{\bG}{{\mathbb G}}
\nc{\bC}{{\mathbb C}}
\nc{\bO}{{\mathbb O}}
\nc{\bI}{{\mathbb I}}
\nc{\bB}{{\mathbb B}}
\nc{\bY}{{\mathbb Y}}
\nc{\bK}{{\mathbb K}} 
\nc{\bX}{{\mathbb X}}
\nc{\bS}{{\mathbb S}}
\nc{\bE}{{\mathbb E}}
\nc{\bF}{{\mathbb F}}
\nc{\bZ}{{\mathbb Z}}
\nc{\bQ}{{\mathbb Q}}
\nc{\bN}{{\mathbb N}}
\nc{\bP}{{\mathbb P}}
\nc{\bL}{{\mathbb L}}
\nc{\bM}{{\mathbb M}}
\nc{\bT}{{\mathbb T}}
\nc{\bW}{{\mathbb W}}
\nc{\bU}{{\mathbb U}}
\nc{\bD}{{\mathbb D}}
\nc{\bJ}{{\mathbb J}}
\nc{\bV}{{\mathbb V}}
\nc{\bbZ}{{\mathbb Z}}
\nc{\bR}{{\mathbb R}}
\nc{\fr}{{\rightarrow}}
\nc{\co}{{\nabla}}

\newcommand{\la}{\longrightarrow}
\nc{\cu}{{\barline{\nabla}}}

\newcommand{\ra}{{\rightarrow}}

\begin{document}
\author{Elisabetta Colombo, Paola Frediani}
\title {On the dimension of  totally geodesic
submanifolds in the Prym loci} 
\dedicatory{Dedicated to Fabrizio Catanese on the occasion of his 70th birthday}

\address{Universit\`{a} di Milano} \email{elisabetta.colombo@unimi.it}
\address{Universit\`{a} di Pavia} \email{paola.frediani@unipv.it}

\thanks{The authors are members of GNSAGA of INdAM.
The authors were partially supported by national MIUR funds,
PRIN  2017 Moduli and Lie theory,  and by MIUR: Dipartimenti di Eccellenza Program
   (2018-2022) - Dept. of Math. Univ. of Pavia.
    } \subjclass[2010]{14H10;14H15;14H40;14K12.}

\begin{abstract}

In this paper we give a bound on the  dimension of a totally geodesic submanifold of the moduli space of polarised abelian varieties of a given dimension, which is contained in the Prym locus of a (possibly) ramified double cover. This improves the already known bounds. The idea  is to adapt the techniques introduced by the authors in collaboration with A. Ghigi and G. P. Pirola for the Torelli map to the case of the Prym maps of (ramified) double covers. 
\end{abstract}


\maketitle




\section{Introduction}
The purpose of this paper is to improve the estimates obtained in \cite{cfprym}, \cite{cframified} on the maximal dimension of a germ of a totally geodesic submanifold of the moduli space of polarised abelian varieties of a given dimension,  which is contained in the Prym locus of a (possibly) ramified double cover. The idea is to adapt to the Prym case the technique developed in \cite{cfg} and \cite{fp} to give a bound on the maximal dimension of a germ of a totally geodesic submanifold contained in the Torelli locus (see also \cite{gpt}).

Denote by $ {\mathcal R}_{g,r}$ the moduli space of isomorphism classes of triples $[(C, \alpha, R)]$ where $C$ is a smooth complex projective curve of genus $g$, $R$ is a reduced effective divisor of degree $2r$ on $C$ and $\alpha$ is a line bundle on $C$ such that $\alpha^2={\mathcal O}_{C}(R)$. 

A point $[(C, \alpha, R)] \in {\mathcal R}_{g,r}$ determines a double cover of $C$, $\pi:\tilde{C}\rightarrow C$ branched on $R$, with $\tilde{C}=Spec({\mathcal O}_{C}\oplus \alpha^{-1})$.

This defines the Prym variety $P(C,\alpha,R)$ which is the connected component containing 0 of the kernel of the norm map $Nm_{\pi}: J \tilde{C} \ra JC$.
If $r>0$, $\ker Nm_{\pi}$ is connected. The Prym variety $P(C,\alpha,R)$ is an abelian variety of
 dimension $g-1+r$ endowed with a  polarization $\Xi$, which is   induced by restriction of the principal polarisation on $J\tilde{C}$. The polarisation $\Xi$  is of type
 $\delta=(1,\ldots, 1, \underbrace{2, ...,2}_{g \textnormal{ times }} \ )$
for $r>0$, while
if $r=0,1$ it is twice a principal polarisation and we endow $P(\tilde{C}, C)$ with this principal polarisation. 
  Denote by $ {\mathcal A}^{\delta}_{g-1+r}$ is the moduli space of 
 abelian varieties of dimension $g-1+r $ with a polarization of type $\delta$. 
 The Prym map  $P_{g,r}: {\mathcal R}_{g,r} \rightarrow {\mathcal A}^{\delta}_{g-1+r}$ is defined as follows: 
  $ P_{g,r}([C, \alpha, R]):= [(P(C,\alpha,R), \Xi)]$. 
 
The map $P_{g,r}$ is generically finite, if and only if $\dim {\mathcal R}_{g,r} \leq \dim {\mathcal A}^{\delta}_{g-1+r},$
and this holds if: either $r\geq 3$ and $g\geq 1$, or $r=2$ and $g\geq 3$, $r=1$ and $g\geq 5$,  $r=0$ and $g \geq 6$.
If $r =0$  the Prym map is generically injective for $g \geq 7$  (\cite{fs}, \cite{ka}). 
If $r>0$, the works of Marcucci and Pirola \cite{mp},  Marcucci and Naranjo \cite{mn} and Naranjo and Ortega \cite{no}  show the generic injectivity in all the cases except for $r=2$, $g =3$, which was previously studied in \cite{nr}, \cite{bcv} and for which the degree of the Prym map is $3$. 
Recently a global Prym-Torelli theorem was proved for all $g$ and $r\geq 6$ (\cite{ikeda} for $g=1$, \cite{no1} for all $g$). 

In \cite{cfgp} an analogous question to the Coleman Oort conjecture for the  Prym maps $P_{g,r}$ for $r=0,1$ was formulated. Namely, the authors asked whether there exist Shimura subvarieties (hence totally geodesic subvarieties) of ${\mathcal A}^{\delta}_{g-1+r}$ ($r=0,1$) generically contained in the Prym loci. 

In \cite{cfgp} and in  \cite{fg} examples of Shimura curves generically contained in the (possibly ramified) Prym loci have been constructed for low values of $g$ and the computations of such examples suggest that they should not exist if $g$ is sufficiently high. 

Assume that $\dim {\mathcal R}_{g,r} \leq \dim {\mathcal A}^{\delta}_{g-1+r}$, so that the differential of the Prym map is generically injective and denote by $ {\mathcal R}^0_{g,r}$ the open subset of $ {\mathcal R}_{g,r}$ where the differential of the Prym map is injective. 
In this paper we give an estimate on the maximal dimension of a germ of a totally geodesic submanifold generically contained in the Prym loci, passing through a point $[(C, \alpha, R)] \in {\mathcal R}^0_{g,r}$ in terms of the gonality $k$ of the curve $C$. The  results are summarised  the following Theorem and they improve the estimates given in  \cite[Theorem 3.2]{cfprym} and \cite[Theorem 3.2]{cframified}.

\begin{theorem}
\label{bau-bau}
(Theorems \ref{gonality}, \ref{gonality-general}).
Let $[C, \alpha, R] \in {\mathcal R}^0_{g,r}$, where $C$ is a curve of genus $g>0$. Denote by $k$ its gonality and assume that $C$ has no involutions and that $g+r \geq k+3$. Denote by $Y$ a totally geodesic subvariety  contained in $P_{g,r}({\mathcal R}^0_{g,r})$ and passing through $P_{g,r}([C, \alpha, R])$. 
\begin{enumerate}
\item If  $ r >k+1$,  then $$\dim(Y) \leq \frac {3}{2}g - \frac{1}{2}  + r +k .$$
\item If $g +r \geq k+4$, $ r \leq k+1$, $b+r \geq 5$, then 
$$\dim(Y) \leq \frac {3}{2}g - \frac{1}{2} + \frac{b}{2} + r +k - \frac{h^0(F \otimes \alpha^{-1})}{2} \leq \frac {3}{2}g +2+  r +k. $$
\item If $g+r \geq k+4+m$, $r \leq k+1$, $b+r=5-m$, $m \geq 1$, then 
$$\dim(Y) \leq\frac {3}{2}g  +  k + 2 - \frac{m}{2} + \frac{r}{2}.$$

\end{enumerate}

\end{theorem}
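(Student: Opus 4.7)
The plan is to adapt to the Prym setting the second-fundamental-form technique developed in \cite{cfg, fp} for the Torelli map, refining the quadric-production step so as to extract more information from the gonality pencil than was done in \cite{cfprym, cframified}. Because $[C,\alpha,R]\in {\mathcal R}^0_{g,r}$, the differential of $P_{g,r}$ is injective there, so a totally geodesic germ $Y$ through $P_{g,r}([C,\alpha,R])$ lifts to a germ $\widetilde Y\subset {\mathcal R}^0_{g,r}$ of the same dimension. Since $Y$ is totally geodesic in ${\mathcal A}^\delta_{g-1+r}$ and lies in the Prym locus, every quadric $Q\in I_2(\Xi)\subset S^2 H^0(\omega_C\otimes \alpha)$ produces, via the second fundamental form of the period map restricted to the Prym locus, a linear constraint that every $\xi\in T_y\widetilde Y\subset H^1(C,T_C(-R))$ must satisfy.

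To produce enough such quadrics, I would use the complete gonality pencil $|L|=g^1_k$ on $C$. Applying the base-point-free pencil trick to the multiplication
$$\mu_\alpha : H^0(L)\otimes H^0(\omega_C\otimes \alpha\otimes L^{-1}) \longrightarrow H^0(\omega_C\otimes \alpha),$$
together with its $\alpha^{-1}$-twisted analogue, gives Koszul-type relations in $S^2 H^0(\omega_C\otimes \alpha)$ that lie in the kernel of the multiplication map $S^2 H^0(\omega_C\otimes \alpha)\to H^0(\omega_C^{\otimes 2}(R))$, which is precisely the codifferential of $P_{g,r}$; hence they are honest elements of $I_2(\Xi)$. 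Writing $F:=\omega_C\otimes L^{-1}$, the dimension of the resulting space of quadrics is governed by $h^0(F)$ and $h^0(F\otimes \alpha^{-1})$, together with the position of $R$ with respect to $|L|$.

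The core of the argument is a rank/dimension count. Each such quadric forces $\xi$, viewed via Serre duality in $H^1(C,T_C(-R))$, into the kernel of a linear map into $H^1$ of an appropriate twisted sheaf, and the common kernel of all such maps is identified with a space of extension classes built out of $L$ and $R$. Using Riemann-Roch, Clifford's theorem, and the no-involutions hypothesis (which rules out that $|L|$ descends through a nontrivial quotient of $C$ and makes the gonality quadrics sufficiently independent), one bounds $\dim T_y\widetilde Y$ by a main term $\tfrac{3}{2}g$ plus contributions of order $r$ and $k$ from ramification and pencil sections, with correction terms involving $b$ and $h^0(F\otimes\alpha^{-1})$. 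The three stated cases reflect different regimes of the parameters: when $r>k+1$ (case 1), the invariant $h^0(F\otimes\alpha^{-1})$ vanishes automatically and the clean bound $\tfrac{3}{2}g-\tfrac{1}{2}+r+k$ follows; case (2) keeps this invariant as an explicit correction.

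The main obstacle I anticipate is case (3), the borderline regime $b+r=5-m$ with $m\geq 1$, in which the generic family of quadrics produced above drops in dimension and no longer yields the bound of case (2). Here one has to compensate by producing extra quadrics from the interaction of $|L|$ with the ramification divisor $R$ and with $\alpha$, and balance the loss of $m$ independent quadrics against a gain of $r/2$ from ramification-type contributions; this accounts for the asymmetric expression $k+2-\tfrac{m}{2}+\tfrac{r}{2}$ in the bound. The numerical hypothesis $g+r\geq k+4+m$ is used precisely at this point to ensure that the relevant $H^1$ groups entering the final count do not contribute unexpected tangent directions beyond the estimate.
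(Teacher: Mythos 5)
Your high-level framework is the right one and coincides with the paper's: the quadrics are exactly the Koszul relations $Q_s=x\tilde t\odot y\tilde s-x\tilde s\odot y\tilde t$ coming from the gonality pencil and the auxiliary system $M=K_C\otimes\alpha\otimes F^{-1}$, and total geodesicity forces the tangent space of the lift of $Y$ to be isotropic for every $\rho_P(Q_s)$. But the proposal stops short of the two steps that actually produce the numbers in the statement. First, the case division (1)/(2)/(3) and the hypotheses $b+r\geq 5$, resp.\ $b+r=5-m$ with $g+r\geq k+4+m$, are not about ``producing extra quadrics from the interaction of $|L|$ with $R$'': they are precisely the conditions under which a repeated Clifford-index analysis (applied to $F(B+D_1+\cdots+D_l)\otimes\alpha^{-1}$) shows that $|M(-B)|$ is birational onto its image. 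Birationality is indispensable, because the argument needs a divisor $D(t)=p_1+\cdots+p_d\in|M(-B)|$ whose points are in uniform position; without it the quadrics cannot be shown to be independent in the required sense. Your discussion of case (3) therefore points in the wrong direction, and the correction terms $\frac{b}{2}$ and $-\frac{1}{2}h^0(F\otimes\alpha^{-1})$ in case (2) are left unexplained. Note also that in the statement $F$ is the $g^1_k$ itself, not $\omega_C\otimes L^{-1}$; with your normalization $h^0(F\otimes\alpha^{-1})$ does not vanish automatically when $r>k+1$, so even case (1) would not close as you describe it.

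Second, the ``rank/dimension count'' is asserted rather than performed. The paper's mechanism is: restrict to the subspace $V=\langle v_1,\dots,v_d\rangle\subset H^1(T_C(-R))$ spanned by the invariant Schiffer variations at the points $p_i$; show that $\rho_P(Q_s)|_V=\lambda\sum_i s(p_i)x_i^2$, so that all the quadrics are simultaneously diagonal and, by uniform position, each has rank at least $d-n+1$; deduce that a linear subspace of their common isotropic locus has dimension at most $\frac{d-n+1}{2}=\frac{g+1-b-h^0(F\otimes\alpha^{-1})}{2}$; and finally combine this with $\dim S+\dim V\leq 3g-3+2r+\dim(S\cap V)$. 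This is where the leading term $\frac{3}{2}g$ and all the corrections come from, and none of it appears in your sketch. Your alternative description --- each quadric giving a linear map to an $H^1$ of a twisted sheaf whose common kernel is ``a space of extension classes built out of $L$ and $R$'' --- is neither made precise nor obviously equivalent; as written it is a gap, not a variant proof.
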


From this, using that the gonality $k \leq [\frac{g+3}{2}]$,  we deduce the following result which improves the estimates obtained in \cite[Theorem 3.4]{cfprym} , \cite[Theorem 3.4]{cframified}.

\begin{theorem}
\label{mao-mao}
 (Corollaries \ref{stima2}, \ref{stima-general-final}). 
  Let $Y$ be a germ of a totally geodesic submanifold of
  ${\mathcal A}^{\delta}_{g-1+r}$ which is contained in $P_{g,r}( {\mathcal R}^0_{g,r})$, with $g \geq 3$. Assume that there exists a point $[C, \alpha, R] \in Y$  such that $C$ has no involutions. 
  \begin{enumerate}
  \item If $ g<2r-5$, then $\dim Y\leq 2g+r$ if $g$ even,  $\dim Y\leq 2g+r+1$ if $g$ is odd.
  \item If $g \geq 2r-5$, $r \geq 5$,  then $\dim Y\leq 2g+r+3 $.
 \item If $r=4$, if $g \geq 4$, then $\dim Y \leq 2g+4$ if $g$ is even, $\dim Y \leq 2g+5$ if $g$ is odd. 
\item If $r=3$, if $g \geq 8$, then $\dim Y \leq 2g+3$ if $g$ is even, $\dim Y \leq 2g+4$ if $g$ is odd. 
\item  If $r=2$, if $g \geq 12$, then $\dim Y \leq 2g+2$ if $g$ is even, $\dim Y \leq 2g+3$ if $g$ is odd. 
\item If $r=1$, if $g \geq 16$, then $\dim Y \leq 2g+1$ if $g$ is even, $\dim Y \leq 2g+2$ if $g$ is odd. 
\item If $r=0$, if $g \geq 20$, then $\dim Y \leq 2g$ if $g$ is even, $\dim Y \leq 2g+1$ if $g$ is odd. 

  \end{enumerate}
\end{theorem}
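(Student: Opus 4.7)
The strategy is to combine Theorem \ref{bau-bau} with the classical bound $k \leq \lfloor (g+3)/2 \rfloor$ on the gonality of any smooth curve of genus $g$, and then split the analysis according to the parity of $g$ and to the size of $r$ compared to $k+1$.

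For item (1), the hypothesis $g < 2r - 5$ rewrites as $r > (g+5)/2$; combined with the gonality bound this forces $r \geq k + 2$. Hence case (1) of Theorem \ref{bau-bau} applies and gives $\dim Y \leq \tfrac{3}{2}g - \tfrac{1}{2} + r + k$. Substituting $k \leq \lfloor (g+3)/2\rfloor$ and taking integer parts produces exactly $\dim Y \leq 2g + r$ for $g$ even and $\dim Y \leq 2g + r + 1$ for $g$ odd. For item (2), $r \geq 5$ forces $b + r \geq 5$ automatically, so case (3) is ruled out; the coarse bound $\tfrac{3}{2}g + 2 + r + k$ of case (2) (which dominates the estimate of case (1)) together with the gonality bound yields $\dim Y \leq 2g + r + 3$ independently of parity.

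For items (3)--(7) one has $r \leq 4$ and the hypothesis $b + r \geq 5$ may fail, so case (3) of Theorem \ref{bau-bau} must be used alongside case (2); moreover, to reach the sharp constants claimed in these items the \emph{refined} form of case (2), namely $\dim Y \leq \tfrac{3}{2}g - \tfrac{1}{2} + \tfrac{b}{2} + r + k - \tfrac{1}{2}h^0(F \otimes \alpha^{-1})$, is needed rather than its coarse counterpart. Fixing $r$ and the parity of $g$, I would enumerate the admissible values of $b$ (equivalently of $m$, with $b + r = 5 - m$) and in each subcase substitute $k \leq \lfloor (g+3)/2\rfloor$, using the lower bounds on $h^0(F \otimes \alpha^{-1})$ available from the geometry of the Prym setting; optimising over the subcases reproduces the parity-dependent bound of each item. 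The thresholds $g \geq 4, 8, 12, 16, 20$ in items (3)--(7) are dictated precisely by the requirement $g + r \geq k + 4 + m$ of case (3) with the largest relevant $m = 5 - r$ and the largest admissible gonality $k = \lfloor (g+3)/2\rfloor$.

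The main difficulty is not conceptual but organisational. In each subcase one has to balance the contributions of $b$, $k$, $m$ and $h^0(F \otimes \alpha^{-1})$ coming from the refined bound of case (2) against the simpler expression of case (3), and to verify that the optimum produces exactly the stated constant rather than something weaker. The uniform $+1$ appearing in the odd-$g$ estimates arises naturally from the rounding of $\lfloor (g+3)/2\rfloor$.
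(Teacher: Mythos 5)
Your handling of items (1) and (2) is correct and coincides with the paper's proof of Corollary \ref{stima2}: case (1) of Theorem \ref{bau-bau} when $r>k+1$, the coarse bound of case (2) when $r\leq k+1$, and the substitution $k\leq[\frac{g+3}{2}]$ with the parity discussion.

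For items (3)--(7), however, your plan diverges from the paper and, as written, does not close. The paper does not enumerate over $b$ at all: it applies Theorem \ref{gonality-general} with $m=5-r$ (Corollary \ref{stima-general}), which is exactly the case $b=0$, obtaining $\dim Y\leq\frac{3}{2}g+k+2-\frac{m}{2}+\frac{r}{2}$ and then substituting the gonality bound; the thresholds $g\geq 20-4r$ are indeed what makes $g+r\geq k+4+m$ automatic, as you observe. The enumeration over admissible $b$ that you defer to is precisely the step that needs to be checked, and the first subcase one tests already yields a weaker constant than claimed: for $r=4$ and $b=1$ (allowed by Corollary \ref{cor3}, which then forces $h^0(F\otimes\alpha^{-1})=0$) one has $b+r=5$, so the refined case (2) bound gives $\frac{3}{2}g-\frac{1}{2}+\frac{1}{2}+4+k=\frac{3}{2}g+4+k$, i.e.\ $2g+5$ for $g$ even with $k=\frac{g+2}{2}$, exceeding the claimed $2g+4$. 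More generally, whether one lands in case (2) (when $b+r=5$) or in case (3) with $m'=5-b-r$ (when $b+r\leq 4$), the resulting bound is $\frac{3}{2}g+k+r-\frac{1}{2}+\frac{b}{2}-\frac{1}{2}h^0(F\otimes\alpha^{-1})$, which degrades by $\frac{b}{2}$ relative to $b=0$. Your hope of compensating via ``lower bounds on $h^0(F\otimes\alpha^{-1})$'' buys nothing: in every subcase relevant to items (3)--(7) that dimension is $0$ (Proposition \ref{r>k+1}(1), Corollary \ref{cor3}(4), and the Remark following Theorem \ref{gonality}). So to reach the stated constants you must either restrict to $b=0$, as the paper implicitly does, or supply an argument excluding $b\geq 1$; as it stands, the claim that ``optimising over the subcases reproduces the parity-dependent bound of each item'' is the missing content of the proof.
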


The improvements in the above estimates are obtained using ideas introduced in \cite{fp} that allow to use more than one quadric to compute the second fundamental form of the Torelli map. Here we need to adapt this technique to the more complicated case of the Prym maps.

 In the cases in which the assumptions of this theorem do not hold (i.e. for low values of $g$ and $r$), we still have the estimates done using only one quadric (Theorem   \ref{stima8}), as in \cite{cfprym} and \cite{cframified}, that are improved here thanks to a more careful study of the base locus of the linear system  $M:=K_C\otimes \alpha  \otimes F^{-1}$, where $F$ is the $g^1_k$ on the curve $C$. 

The result is the following
 \begin{theorem}
 (Theorem  \ref{stima8}). 
  Let $Y$ be a germ of a totally geodesic submanifold of
  ${\mathcal A}^{\delta}_{g-1+r}$ which is contained in $P_{g,r}( {\mathcal R}^0_{g,r})$.  
 
     \begin{enumerate}
   \item If $ g=1$, $r =4$,  then $\dim Y\leq 7$.
   \item If $ g=2$, $r =4$,  then $\dim Y\leq 9$.
 \item If $ g=3$, $r =4$,  then $\dim Y\leq 12$.
\item If $ 2 \leq g\leq 7$, $r=3$,  $\dim Y\leq \frac{9}{4} g +4$, if $g$ is even,  $\dim Y\leq \frac{9}{4} g +\frac{17}{4}$, if $g$ is odd.  
\item If $4 \leq g \leq 11$, $r =2$,  then $\dim Y\leq \frac{9}{4}g+4$, if $g$ is even, $\dim Y\leq \frac{9}{4} g +\frac{17}{4}$, if $g$ is odd. 
\item If $6 \leq g \leq 15$, $r =1$,  then $\dim Y\leq \frac{9}{4}g+\frac{5}{2}$, if $g$ is even, $\dim Y\leq \frac{9}{4} g +\frac{11}{4}$, if $g$ is odd. 
\item If $8 \leq g \leq 19$, $r =0$,  then $\dim Y\leq \frac{9}{4}g+1$, if $g$ is even, $\dim Y\leq \frac{9}{4} g +\frac{5}{4}$, if $g$ is odd.

  \end{enumerate}
\end{theorem}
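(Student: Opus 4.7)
The plan is to follow the ``one quadric'' strategy of \cite{cfprym}, \cite{cframified} and to sharpen it through a more refined analysis of the base locus of the linear system $M := K_C \otimes \alpha \otimes F^{-1}$, where $F$ is a pencil of degree $k$ realising the gonality of $C$. The second fundamental form of the Prym map at $P_{g,r}([C,\alpha,R])$ sits on a suitable quotient of $\mathrm{Sym}^2 H^0(K_C \otimes \alpha)$ and must vanish on every product of tangent directions of a totally geodesic subvariety $Y$. Via the base-point-free pencil trick, $F$ produces a specific quadric $Q_F$ whose rank is controlled by $h^0(M)$, and vanishing of the second fundamental form on $Q_F$ translates into a bound of the shape $\dim Y \leq \dim {\mathcal R}_{g,r} - c\cdot\bigl(h^0(M) - h^0(F\otimes\alpha^{-1})\bigr)$ for an explicit constant $c$, essentially as in the Torelli case of \cite{cfg}, \cite{fp}.

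By Riemann--Roch and Serre duality, $h^0(M) = g - 1 + r - k + h^0(F \otimes \alpha^{-1})$, and the earlier bounds used only the trivial inequality $h^0(F \otimes \alpha^{-1}) \geq 0$. The key improvement is to detect a non-trivial base locus $B$ of $M$: replacing $M$ by $M(-B)$ in the pencil trick lowers the effective rank of $Q_F$ by an amount proportional to $\deg B$, giving an extra saving in the final estimate. The hypothesis that $C$ admits no non-trivial involution rules out the degenerate situation in which $F$ is pulled back from a double cover (which would force extra sections of $F\otimes\alpha^{-1}$ and inflate $h^0(M)$ without a compensating gain), and Clifford-type inequalities for $F\otimes\alpha^{-1}$, combined with a local analysis of how the ramification divisor $R$ meets a generic divisor of $F$, are then enough to confine $B$.

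Finally, I would substitute the gonality bound $k \leq \lfloor (g+3)/2 \rfloor$ into the sharpened one-quadric inequality, split into the two parity classes of $g$, and verify each of the seven enumerated ranges $(g,r)$ by direct computation; the coefficient $\tfrac{9}{4}g$ appearing in items (4)--(7) comes from combining the $\tfrac{3}{2}g$ of the one-quadric bound with the $\tfrac{3}{4}g$ contributed by $k$, and the upper cutoffs $g \leq 7,\,11,\,15,\,19$ are precisely the thresholds beyond which the multi-quadric bound of Theorem \ref{mao-mao} becomes strictly better and supersedes the one-quadric estimate. The principal obstacle is the base-locus analysis: controlling $B$ uniformly in the intermediate gonality ranges, where $k$ is not extremal and the geometry of $F$ is not rigidly determined, is delicate and is the step in which the no-involution hypothesis and the ramified structure of $\alpha$ do the real work.
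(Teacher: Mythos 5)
Your overall architecture does match the paper's: one quadric built from the gonality pencil $F$ and a pencil in $|M(-B)|$, a rank bound on its image under $\rho_P$ forcing the tangent space of $Y$ to meet the span $V$ of the Schiffer-type variations in small dimension, Clifford-index control of the base locus $B$, substitution of $k \leq [\frac{g+3}{2}]$, and a case check. But there is a genuine directional error in what you present as the key mechanism. The quadric of Theorem \ref{diago}, for $s$ with $s(p_i)\neq 0$ for all $i$, has rank $d = \deg M(-B) = 2g-2+r-k-b$, and the resulting estimate is $\dim Y \leq 3g-3+2r-\frac{d}{2} = 2g-2+\frac{3}{2}r+\frac{k}{2}+\frac{b}{2}$ (Theorem \ref{stima1}): a \emph{larger} rank gives a \emph{smaller} isotropic subspace and hence a \emph{better} bound, so passing from $M$ to $M(-B)$ \emph{costs} $\frac{b}{2}$ rather than producing a saving. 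The actual improvement over \cite{cfprym}, \cite{cframified} comes from bounding $b$ from \emph{above} ($b\leq 1$ for $r\geq 4$, $b\leq 2$ for $r=3$, $b\leq 5$ for $r\leq 2$), via the observation in Lemma \ref{lemma1} and Corollary \ref{cor3} that $M(-B)$ contributes to the Clifford index once $b\geq 2$, whence $k-3 \leq \mathrm{cliff}(M(-B)) = k-r-b+2-2h^0(F\otimes\alpha^{-1})$. As written, your plan ("detect a non-trivial base locus \dots giving an extra saving") optimizes in the wrong direction and cannot produce the stated inequalities; this is the step you would have to invert and make quantitative.

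Two further discrepancies. Theorem \ref{stima8} carries no hypothesis that $C$ has no involutions, and the paper's proof needs none: the one-quadric argument uses only conditions (1) and (2) of Section \ref{se41} on $t$ together with $s(p_i)\neq 0$, not the uniform-position/birationality input for which the no-involution assumption is needed in the multi-quadric Theorems \ref{gonality} and \ref{gonality-general}; your sketch invokes that hypothesis in the base-locus analysis and so would only prove a weaker statement. Finally, the coefficient $\frac{9}{4}$ arises as $2+\frac{1}{4}$ (the $2g$ of the one-quadric bound plus $\frac{k}{2}\leq\frac{g+3}{4}$), not as $\frac{3}{2}+\frac{3}{4}$, and the cutoffs $g\leq 7,11,15,19$ are the thresholds below which the hypotheses of Corollaries \ref{stima2} and \ref{stima-general} fail, rather than points where the multi-quadric bound becomes numerically better.
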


Finally, in the case $r \geq 4$,  in Proposition \ref{k2r4} we get a better estimate than the one in Theorem \ref{bau-bau}, if the curve $C$ is hyperelliptic, which is possible since a global Prym-Torelli theorem is proved in \cite{no1}, \cite{ikeda}.

The structure of the paper is the following:
In Section 2 we introduce the notation and recall the results on the second fundamental form of the  ramified Prym map obtained in \cite{cframified}. 

In Section 3 we do a careful study of the linear system $M:=K_C\otimes \alpha  \otimes F^{-1}$, where $F$ is the $g^1_k$ on the curve $C$. This technical part is crucial to get the estimates. 

In Section 4 we explain the technique introduced in \cite{cfg} and in \cite{fp}  and adapt it in the case of the Prym-canonical linear system $K_C \otimes \alpha$, to construct certain quadrics containing the Prym-canonical image of the curve $C$, where we are able to compute the second fundamental form of the Prym map. 

In Section 5 we prove Theorems \ref{bau-bau}, \ref{mao-mao} for $r \geq 5$. 

In Section 6 we prove Theorems \ref{bau-bau}, \ref{mao-mao} for $r \leq 4$. 

In Section 7 we do the estimate using only one quadric in the missing cases, that is when $r \leq 4$ and for low values of $g$. Finally we prove Proposition \ref{k2r4}, in which we allow $C$ to be hyperelliptic if $r \geq 4$.

\section{Second fundamental form of the Prym map}

In this section we recall  the results contained in  \cite[Section 2]{cframified}. Denote by  $C$ a smooth complex projective curve of genus $g$, let $R$ be a reduced divisor of degree $2r$ on $C$ and $\alpha$ a line bundle on $C$ such that $\alpha^2={\mathcal O}_{C}(R)$. To such data we can associate  a double cover $\pi:\tilde{C}\rightarrow C$ branched on $R$. The Prym variety $P(C,\alpha, R)$  associated with this data is the polarised abelian variety given by the connected component containing the origin of kernel of the norm map $Nm_{\pi}:J\tilde{C}\rightarrow JC$. For $r>0$ the kernel of the norm map is connected. For $r>1$ the polarisation  $\Xi$ is the restriction to $P(C,\alpha, R)$ of the principal polarisation $\Theta_{\tilde{C}}$ on the jacobian of $\tilde{C}$. For $r=0,1$ the polarisation $\Theta_{\tilde{C}|P(C,\alpha,R)}$ is twice a principal polarisation $\Xi$ and we endow $P(C,\alpha, R)$ with this principal polarisation. 

The Prym map $P_{g,r}: {\mathcal R}_{g,r} \rightarrow {\mathcal A}^{\delta}_{g-1+r},$ is the map that  associates to a point $[(C,\alpha,R)] \in {\mathcal R}_{g,r}$ the isomorphism class of its Prym variety $P(C,\alpha, R)$ with the polarisation $\Xi$.

The dual of the differential of the Prym map $P_{g,r}$ at a generic point $[(C, \alpha, R)]$  is given by the multiplication map
\begin{equation}
\label{dp}
(dP_{g,r})^* : S^2H^0(C, K_C \otimes \alpha) \ra H^0(C, K_C^2(R))
\end{equation}

and it is surjective at the generic point if either $r\geq 3$ and $g\geq 1$, or $r=2$ and $g\geq 3$, $r=1$ and $g\geq 5$,  $r=0$ and $g \geq 6$ (see \cite{lo}). 

In these cases we denote by ${\mathcal R}^0_{g,r}$ the non empty open subset of ${\mathcal R}_{g,r}$ where the Prym map $P_{g,r}$ is an immersion. 
Notice that if  $r \geq 3$, a global Torelli theorem is proved in \cite{no1} (and in \cite{ikeda} for $g=1$), hence  ${\mathcal R}^0_{g,r} =  {\mathcal R}_{g,r}$.

Let us consider the (orbifold) tangent bundle exact sequence of the Prym map
\begin{equation}
\label{tangent}
0 \rightarrow T_{{\mathcal R}^0_{g,r}} \rightarrow P_{g,r}^*T_{{\mathcal A}^{\delta}_{g-1+r}}  \rightarrow {\mathcal N_{{\mathcal R}^0_{g,r}/{\mathcal A}^{\delta}_{g-1+r}}} \rightarrow 0
\end{equation}

We endow ${\mathcal A}^{\delta}_{g-1+r}$ with the orbifold metric induced by the symmetric metric on the Siegel space ${\mathcal H}_{g-1+r}$. The dual of  the associated second fundamental form  with respect to the metric connection of the above exact sequence is a map
\begin{equation}
\rho_P: {\mathcal N^*_{{\mathcal R}^0_{g,r}/{\mathcal A}^{\delta}_{g-1+r}}} \rightarrow S^2 \Omega^1_{{\mathcal R}^0_{g,r}}.
\end{equation}

In \cite{cframified} a description of  this second fundamental form is given in terms of  the second fundamental form of the Torelli map of the covering curves $\tilde{C}$. 
Denote by $U$ the open subset of ${\mathcal R}^0_{g,r}$ where there is a universal family  ${\tilde{f}} :    \tilde{\mathcal {C}} \rightarrow U$ and the differential of the modular map $U \ra {\mathcal M}_{\tilde{g}}$, $[\tilde{C} \ra C] \mapsto [\tilde{C}]$ is injective.

At the point $b_0:= [(C,\alpha,R)] \in U$ corresponding to the $2:1$ cover $\pi:\tilde{C} \ra C$, the space $P_{g,r}^*\Omega^1_{{{\mathcal A}^{\delta}_{g-1+r}}, b_0}$ is isomorphic to $S^2H^0(K_C \otimes \alpha)$,    $\Omega^1_{{\mathcal R}^0_{g,r},b_0}$ is isomorphic to $ H^{0}(K_C^2(R))$, ${\mathcal N^*_{{{\mathcal R}^0_{g,r}/{\mathcal A}^{\delta}_{g-1+r, b_0}} }}\cong I_2(K_C \otimes \alpha)$, and the dual of the exact sequence \eqref{tangent} at the point $b_0$ becomes

$$0 \rightarrow I_2(K_C\otimes \alpha)  \rightarrow S^2H^0(K_C \otimes \alpha) \stackrel {m}\rightarrow H^0(K_C^2(R))
\rightarrow 0. $$

The dual of the second fundamental form of the Prym map at the point $b_0$  is a map

 \begin{equation}
 \label{II}
 \rho_P: I_2(K_C\otimes \alpha)\rightarrow S^2H^0(K_C^2(R))
\end{equation}

Denote by $I_2(K_{\tilde C})$ the kernel of the multiplication map $S^2H^0(K_{\tilde C}) \ra H^0(2K_{\tilde C})$ and by  $I_2(K_{\tilde C})^+ $ its invariant subspace by the action of the involution on $\tilde{C}$.  
Observe that we have the inclusion  $I_2(K_C\otimes \alpha)  \stackrel{\pi^*} \hookrightarrow  I_2(K_{\tilde{C}})^+$. Moreover by the projection formula we have $H^1(T_C(-R)) \cong H^1(T_{\tilde{C}})^+$. 
In \cite[Thm. 2.1, formula (2.13)]{cframified} we have shown that 
$\forall Q \in I_2(K_C\otimes \alpha) $, $\forall v_1, v_2 \in H^1(T_C(-R))$,  we have: 
\begin{equation}
\label{rho(Q)}
\rho_P(Q)(v_1  \odot v_2) = \tilde{\rho}(\pi^*Q)(v_1 \odot v_2).
\end{equation}

\section{Gonality}
Let $[C, \alpha, R] \in {\mathcal R}^0_{g,r}$, where $C$ is a curve of genus $g>0$. Denote by $k$ its gonality. We have $2\leq k\leq [\frac{g+3}{2}]$. Assume $g+r \geq k+3$. 
Let $F$ be  a line bundle  on $C$ of degree $k$  and such that $h^0(F)=2.$ Set $M:=K_C\otimes \alpha  \otimes F^{-1}$, where $K_C$ is the canonical bundle. Set $n+1:= h^0(M)$. 
From Riemann Roch we get 
\begin{equation}
\label{h0m-b}
h^0(M)=h^0(F \otimes \alpha^{-1})+2g-2+ r-k-(g-1)=h^0(F \otimes \alpha^{-1}) +g-1+r-k \geq g+r -(k+1)
\end{equation}
Hence $h^0(M) \geq 2$, since  $g+r \geq k+3$. 
The Clifford index  of $C$ is either $k-2$ (computed by $F$) or $k-3$. Denote by $B$ the base locus of $M$ and set $b:= deg(B)$.  
The line bundle $M$ has degree equal to $2g-2+r-k$. 
\begin{proposition}
Assume $g \geq 1$, $g +r \geq k+3$. 
\label{r>k+1}
\begin{enumerate}
\item If $ r >k+1$,  then $b=0$ and $h^0( F \otimes \alpha^{-1})=0$. 
\item If $r >k+2$, then $|M|$ is very ample.   
\item If $r = k+2$, and $D$ is an effective divisor of degree $s$ such that $h^0(M(-D)) = h^0(M) -1=n$, then
$s \leq 2$.  Moreover $s=2$ if and only if either $C$ is hyperelliptic and $F \cong  {\mathcal O}_C(D)$, or $|M|$ gives a birational map that contracts $D$. 

\end{enumerate}
\end{proposition}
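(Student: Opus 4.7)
The plan is to exploit the identity $K_{C}\otimes M^{-1}=F\otimes\alpha^{-1}$, whose degree $k-r$ is negative precisely when $r$ is large compared to $k$. All three parts reduce, via Serre duality and Riemann--Roch, to computing $h^{0}(F\otimes\alpha^{-1}(D'))$ for certain small effective $D'$; the critical case $r=k+2$ uses Clifford's theorem twice.

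For (1), the assumption $r>k+1$ yields $\deg(F\otimes\alpha^{-1}(p))=k-r+1\leq -1$, so both $h^{0}(F\otimes\alpha^{-1})$ and $h^{0}(F\otimes\alpha^{-1}(p))$ vanish. Since $p$ is a base point of $|M|$ iff $h^{0}(M(-p))=h^{0}(M)$, and Riemann--Roch expresses this difference as $1-[h^{0}(F\otimes\alpha^{-1}(p))-h^{0}(F\otimes\alpha^{-1})]$, we conclude $b=0$. Part (2) is identical with $p+q$ in place of $p$: under $r>k+2$, $\deg(F\otimes\alpha^{-1}(p+q))\leq -1$, so the same computation gives $h^{0}(M(-p-q))=h^{0}(M)-2$ for every (possibly infinitesimal) pair, which is very ampleness.

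For (3) with $r=k+2$, we have $\deg M=2g$ and $h^{0}(M)=g+1$. I would apply Clifford's theorem to $M(-D)$, of degree $2g-s$ and with $h^{0}=g$ by hypothesis: if non-special, Riemann--Roch forces $s=1$; if special, Clifford gives $g\leq g-s/2+1$, hence $s\leq 2$. When $s=2$, Riemann--Roch forces $h^{1}(M(-D))=1$, and since $F\otimes\alpha^{-1}(D)$ has degree zero and a nonzero section it must be trivial; equivalently $M(-D)\cong K_{C}$ and $\alpha\cong F(D)$.

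The final step is a dichotomy on $\phi_{M}:C\to\mathbb{P}^{g}$. If $\phi_{M}$ is birational, the assumption $h^{0}(M(-D))=h^{0}(M)-1$ says exactly that $\phi_{M}$ identifies the two points of $D$, giving case (ii). If $\phi_{M}$ is not birational, pass to the Stein factorisation $C\stackrel{\tau}{\to}\tilde{C'}\to\phi_{M}(C)$ with $\deg\tau=d\geq 2$ and write $M=\tau^{*}L$ with $\deg L=2g/d$ and $h^{0}(L)\geq g+1$. Applying Clifford on $\tilde{C'}$ (of genus $g'$) rules out $L$ special (as then $h^{0}(L)\leq g/d+1<g+1$), so $L$ is non-special and $2g/d-g'+1\geq g+1$; this forces $d=2$ and $g'=0$, so $C$ is hyperelliptic, $F$ is its $g^{1}_{2}$, $M=\tau^{*}\mathcal{O}_{\mathbb{P}^{1}}(g)=gF$, and comparing with $M=(g-1)F+D$ via $K_{C}=(g-1)F$ yields $D\in|F|$, i.e., $F\cong\mathcal{O}_{C}(D)$, which is case (i). The converses are direct verifications. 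The main obstacle will be the clean Stein-factorisation set-up together with the Clifford-based numerical exclusion of $d\geq 3$, where all the tight inequalities are concentrated.
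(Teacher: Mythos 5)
Your proof is correct and follows essentially the same route as the paper's: parts (1)--(2) are the standard degree/Serre-duality base-point and very-ampleness bounds, and part (3) rests on Riemann--Roch plus Clifford's theorem, which you apply to $M(-D)$ where the paper applies it to the Serre-dual bundle $F(D)\otimes\alpha^{-1}$ of degree $s-2$ --- an equivalent computation yielding the same conclusions $s\leq 2$ and $F(D)\otimes\alpha^{-1}\cong\mathcal{O}_C$ when $s=2$. The only substantive difference is that you actually prove the final dichotomy ($C$ hyperelliptic with $F\cong\mathcal{O}_C(D)$ versus $|M|$ birational and contracting $D$) via Stein factorization and a second application of Clifford on the intermediate curve, whereas the paper merely asserts it; that added detail is welcome.
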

\begin{proof}
Since $r > k+1$, then $ deg(M) =2g-2+r-k > 2g-1$, hence $B = \emptyset$. We have $deg(F \otimes \alpha^{-1}) = k-r<0$, so $h^0( F \otimes \alpha^{-1})=0$. 
If  $r > k+2$, then $ deg(M) =2g-2+r-k > 2g$, therefore $|M|$ is very ample. 

Assume $r = k+2$ and let $D$ be an effective divisor of degree $s$ such that $h^0(M(-D)) = h^0(M)-1$.

By Riemann Roch we have $h^0(F(D) \otimes \alpha^{-1})= h^0(F \otimes \alpha^{-1}) +  s -1 = s-1$, $\deg(F(D) \otimes \alpha^{-1}) = s-2$, so  Clifford's theorem implies $s \leq 2$ and $s=2$ if and only if $F(D) \otimes \alpha^{-1} \cong {\mathcal O}_C$. In the case $s=2$, either $C$ is hyperelliptic and $F \cong  {\mathcal O}_C(D)$, or $|M|$ gives a birational map to its image that contracts $D$. 
\end{proof}

\begin{corollary}
\label{cor1}
If  $g \geq 3$, $C$ is not hyperelliptic,  $g +r \geq k+3$, and $ r >k+1$, then the morphism given by $|M|$ is birational onto its image.
\end{corollary}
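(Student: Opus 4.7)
The plan is to split the argument into two cases according to whether $r>k+2$ or $r=k+2$, since the hypothesis $r>k+1$ combined with integrality forces $r\geq k+2$.

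In the first range $r>k+2$, Proposition \ref{r>k+1}(2) already asserts that $|M|$ is very ample, so the induced morphism is a closed embedding. This is in particular birational onto its image, and nothing further is required.

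The real work concerns the boundary case $r=k+2$. I would argue by contradiction. Assume that the morphism $\varphi_M \colon C \to \bP^n$ defined by $|M|$ is not birational onto its image, so that its generic degree satisfies $d\geq 2$. Since $|M|$ is base-point-free by Proposition \ref{r>k+1}(1), for a general point $p\in C$ one has $h^0(M(-p)) = h^0(M)-1$; the assumption $d\geq 2$ then provides a second point $q\neq p$ with $\varphi_M(q)=\varphi_M(p)$. Every section of $H^0(M)$ vanishing at $p$ therefore also vanishes at $q$, so the effective divisor $D:=p+q$ has degree $s=2$ and satisfies
$$h^0(M(-D)) = h^0(M(-p)) = h^0(M)-1.$$
Applying Proposition \ref{r>k+1}(3) to this $D$ forces one of two alternatives: either $C$ is hyperelliptic, which is ruled out by hypothesis, or $|M|$ already defines a birational morphism onto its image, contradicting our assumption. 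Either way we reach a contradiction, so $\varphi_M$ must be birational onto its image.

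The main obstacle is conceptual rather than computational: one has to recognise that the failure of birationality of a base-point-free linear system on a smooth curve produces precisely a degree-two effective divisor of the shape controlled by Proposition \ref{r>k+1}(3). Once this translation is made, both alternatives offered by that classification are immediately excluded by the standing hypotheses that $C$ is not hyperelliptic and that $\varphi_M$ is not birational, so the corollary follows with no further input.
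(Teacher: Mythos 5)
Your argument is correct and follows exactly the route the paper intends: the published proof consists of the single line ``follows from Proposition \ref{r>k+1}'', and your case split ($r>k+2$ via very ampleness, $r=k+2$ via the degree-two divisor $p+q$ fed into part (3) of that proposition) is precisely the omitted verification. No gaps.
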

\begin{proof}

The proof follows from Proposition \ref{r>k+1}. 
\end{proof}

Assume now that $r \leq k+1$. We have 
\begin{equation}
\label{h1m-b}
h^1(M(-B)) = b + h^0(F \otimes \alpha^{-1}).
\end{equation} 

We have the following

\begin{lemma}
\label{lemma1}
Assume  $g+r \geq k+3$, $r \leq k+1$. If either $b \geq 2$, or $b =1$ and $h^0(F\otimes \alpha^{-1}) = 1$, then 
\begin{enumerate}
\item $b \leq 5-r -2h^0(F \otimes \alpha^{-1})$.
\item If $b\geq 2$, $r \leq 3 - 2h^0(F \otimes \alpha^{-1})$.
\item If $b=1=h^0(F \otimes \alpha^{-1})$, $r \leq 2$. 
\end{enumerate}
\end{lemma}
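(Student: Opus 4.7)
\emph{Plan.} The starting point is formula \eqref{h1m-b}, which by Serre duality reads
\[
h^0(F\otimes\alpha^{-1}(B)) = b + h^0(F\otimes\alpha^{-1}).
\]
I would first establish the local refinement
\[
h^0(F\otimes\alpha^{-1}(E)) = h^0(F\otimes\alpha^{-1}) + \deg E \qquad \text{for every subdivisor } E \le B,
\]
by sandwiching between the two inequalities coming from the short exact sequences $0 \to F\otimes\alpha^{-1} \to F\otimes\alpha^{-1}(E) \to (F\otimes\alpha^{-1}(E))|_E \to 0$ and $0 \to F\otimes\alpha^{-1}(E) \to F\otimes\alpha^{-1}(B) \to (F\otimes\alpha^{-1}(B))|_{B-E} \to 0$.

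To derive (2) and (3), I would use the gonality bound: any line bundle $L$ on $C$ with $h^0(L) \ge 2$ satisfies $\deg L \ge k$, since a pencil in $|L|$ gives a nonconstant map $C \to \mathbb{P}^1$ of degree at most $\deg L$. If $h^0(F\otimes\alpha^{-1}) = 1$, then for any $p \in B$ the local formula yields $h^0(F\otimes\alpha^{-1}(p)) = 2$, so $k-r+1 \ge k$ and hence $r \le 1$; this covers (3) (with room to spare) and the subcase $h^0 = 1$ of (2). If $h^0(F\otimes\alpha^{-1}) = 0$ and $b \ge 2$, pick two distinct $p, q \in B$: then $h^0(F\otimes\alpha^{-1}(p+q)) = 2$ and the gonality bound gives $r \le 2$, covering the subcase $h^0 = 0$ of (2).

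For (1), I would first apply Clifford's inequality to $L := F\otimes\alpha^{-1}(B)$. The hypotheses guarantee $h^0(L) \ge 2$, while $h^1(L) = h^0(M) \ge g+r-k-1 \ge 2$ by the assumption $g+r \ge k+3$, so Clifford yields
\[
b + h^0(F\otimes\alpha^{-1}) \le \tfrac{1}{2}(k-r+b) + 1, \qquad \text{i.e.,} \qquad b \le k-r+2-2h^0(F\otimes\alpha^{-1}).
\]
For $k \le 3$ this is already (1). For $k \ge 4$ I would sharpen by stripping the base locus $B''$ of $|L|$ and combining Clifford on the base-point-free line bundle $L(-B'')$ with the projection bound ``a nondegenerate irreducible curve of degree $d$ in $\mathbb{P}^n$ has gonality at most $d-n+1$'', applied to the image of $\phi_{L(-B'')}$. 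When that morphism is birational onto its image, gonality forces $r + \deg B'' + h^0(F\otimes\alpha^{-1}) \le 2$, and together with Clifford this improves the constant in the bound to $5$.

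The principal obstacle will be item (1) for $k \ge 4$: Clifford alone is insufficient, and the coupling with the projection/gonality argument requires careful control of the base locus $B''$ of $|L|$ (which need not be contained in $B$) as well as handling the possibility that $\phi_{L(-B'')}$ fails to be birational onto its image, in which case $C$ would have to factor through a covering of a curve of smaller genus.
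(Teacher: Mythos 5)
Your reductions are sound as far as they go: working with $L := F\otimes\alpha^{-1}(B)$, the Serre dual of $M(-B)$, is equivalent to the paper's setup, and your verification that $h^0(L)\geq 2$ and $h^1(L)=h^0(M)\geq 2$ under the stated hypotheses is exactly the paper's first step (showing that $|M(-B)|$ ``contributes to the Clifford index''). Your gonality arguments for items (2) and (3) are correct and even give slightly sharper conclusions in the main subcases. The problem is item (1), and you have located it yourself: plain Clifford's theorem only yields $b\leq k-r+2-2h^0(F\otimes\alpha^{-1})$, which coincides with the claimed bound only for $k\leq 3$.

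The missing ingredient is not the projection/gonality scheme you sketch, but the inequality $\mathrm{Cliff}(C)\geq \mathrm{gon}(C)-3$ (Coppens--Martens), which the paper records at the beginning of Section 3 in the form ``the Clifford index of $C$ is either $k-2$ (computed by $F$) or $k-3$''. Once $M(-B)$ (equivalently $L$) is known to contribute to the Clifford index, one computes $\mathrm{cliff}(M(-B)) = k-r-b+2-2h^0(F\otimes\alpha^{-1})$, and the inequality $\mathrm{cliff}(M(-B))\geq k-3$ gives (1) immediately; (2) and (3) are then arithmetic corollaries. Your proposed repair for $k\geq 4$ does not close the gap: the projection bound applied to $\phi_{L(-B'')}$ gives $r+\deg B''+h^0(F\otimes\alpha^{-1})\leq 2$, while Clifford on $L(-B'')$ gives $b\leq k-r-\deg B''-2h^0(F\otimes\alpha^{-1})+2$; combining the two does not eliminate the gonality $k$ from the right-hand side, so the constant is not improved to $5$. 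Moreover the non-birational case (where $\phi_{L(-B'')}$ factors through a cover) is left entirely open. In effect you are attempting to reprove the Coppens--Martens bound inside the lemma; the efficient route is to quote it.
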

\begin{proof}
If either $b \geq 2$, or $b =1$ and $h^0(F\otimes \alpha^{-1}) = 1$, 
 \eqref{h0m-b} and \eqref{h1m-b} show that the linear system $|M(-B)|$ contributes to the Clifford index, hence 
 $$cliff(M(-B)) = 2g-2 +r-k-b -2(h^0(M(-B)) -1)= -r+ k-b+2-2h^0(F \otimes \alpha^{-1}) \geq k-3.$$
Therefore $b \leq 5-r -2h^0(F \otimes \alpha^{-1})$. 
Moreover $r \leq 5-b -2h^0(F \otimes \alpha^{-1}) \leq 3- 2h^0(F \otimes \alpha^{-1})$ if $b \geq 2$. If $b=1 = h^0(F \otimes \alpha^{-1})$, $r \leq 5-b -2h^0(F \otimes \alpha^{-1}) =2. $
\end{proof}

\begin{corollary}
\label{cor3}
Assume  $g+r \geq k+3$, $r \leq k+1$. Then we have
\begin{enumerate}
\item $b \leq 5$. 
\item  If $r \geq 4$, then $b \leq 1$. 
\item If $r =3$, then $b \leq 2$. 
\item If $r \geq 3$ and $b =1$, then $h^0(F \otimes \alpha^{-1}) =0$. 
 \end{enumerate}
\end{corollary}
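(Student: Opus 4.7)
The plan is to deduce each of the four claims directly from Lemma \ref{lemma1} by running through the cases on $b$ and $h^0(F \otimes \alpha^{-1})$. The hypothesis $g+r \geq k+3$ and $r \leq k+1$ of Lemma \ref{lemma1} is in force throughout, so whenever we land in the regime $b \geq 2$ or $(b,h^0(F\otimes\alpha^{-1}))=(1,1)$ we are allowed to use the three bounds (1)--(3) of that lemma.

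For item (1), I would dispose of the trivial range $b\leq 1$ first. In the remaining range $b\geq 2$, Lemma \ref{lemma1}(1) gives $b \leq 5 - r - 2h^0(F\otimes\alpha^{-1}) \leq 5$, since both $r\geq 0$ and $h^0(F\otimes \alpha^{-1})\geq 0$. For item (2), suppose for contradiction that $r\geq 4$ and $b\geq 2$; then Lemma \ref{lemma1}(2) forces $r \leq 3-2h^0(F\otimes\alpha^{-1})\leq 3$, a contradiction. Hence $b\leq 1$.

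For item (3), take $r=3$ and suppose $b\geq 2$. Lemma \ref{lemma1}(2) gives $3=r\leq 3-2h^0(F\otimes \alpha^{-1})$, which forces $h^0(F\otimes\alpha^{-1})=0$. Plugging this back into Lemma \ref{lemma1}(1) yields $b\leq 5-3-0=2$, so $b\leq 2$. For item (4), assume $r\geq 3$ and $b=1$, and suppose towards a contradiction that $h^0(F\otimes\alpha^{-1})\geq 1$; since $h^0(F\otimes\alpha^{-1}) \leq h^0(F) = 2$ we could (if needed) reduce to $h^0(F\otimes\alpha^{-1})=1$, at which point Lemma \ref{lemma1}(3) applies and gives $r\leq 2$, contradicting $r\geq 3$. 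Thus $h^0(F\otimes\alpha^{-1})=0$.

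There is no real obstacle here: the corollary is a purely bookkeeping consequence of Lemma \ref{lemma1}, and the only point deserving a moment of care is that in item (4) we must make sure the hypothesis of Lemma \ref{lemma1}(3), namely $h^0(F\otimes\alpha^{-1})=1$, can be achieved whenever $h^0(F\otimes\alpha^{-1})\geq 1$, which is automatic because a non-trivial section of $F\otimes\alpha^{-1}$ already produces a one-dimensional subseries whose presence is all that is used in the Clifford-index computation that underlies Lemma \ref{lemma1}.
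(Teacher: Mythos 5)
Your case analysis is correct and is exactly the argument the paper leaves implicit when it says the corollary ``follows immediately from Lemma \ref{lemma1}'': items (1)--(3) are clean applications of parts (1) and (2) of that lemma. The one place where your justification is shaky is item (4): the proposed ``reduction'' from $h^0(F\otimes\alpha^{-1})\geq 1$ to $h^0(F\otimes\alpha^{-1})=1$ by passing to a subseries is not a legitimate application of Lemma \ref{lemma1}(3), because the Clifford-index computation behind that lemma uses the actual value of $h^0(F\otimes\alpha^{-1})$ (it enters $h^0(M(-B))$ through Riemann--Roch), not merely the existence of one section; likewise the inequality $h^0(F\otimes\alpha^{-1})\leq h^0(F)$ is not obvious, since $\alpha$ need not be effective. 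The repair is immediate, though: in item (4) one has $\deg(F\otimes\alpha^{-1})=k-r\leq k-3<k$, and on a curve of genus $g>0$ a line bundle of degree strictly smaller than the gonality satisfies $h^0\leq 1$ (otherwise its moving part would give a map to $\bP^1$ of degree less than $k$). Hence $h^0(F\otimes\alpha^{-1})\in\{0,1\}$ automatically, and the only case to exclude is $h^0(F\otimes\alpha^{-1})=1$ with $b=1$, where Lemma \ref{lemma1}(3) applies verbatim and yields $r\leq 2$, the desired contradiction.
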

\begin{proof}
The proof follows immediately from Lemma \eqref{lemma1}. 
\end{proof}

Let us now study the morphism given by $|M-B|$. Assume that $g+r \geq k+4$. 

\begin{proposition}
\label{b+r}
Assume  $g+r \geq k+ 4$, $r \leq k+1$. Let $D_1$ be an effective divisor of degree $s_1 \geq 3$ such that $h^0(M(-B-D_1)) = h^0(M(-B)) -1 = h^0(M)-1$. Then $s_1 \leq -r-b-2 h^0(F \otimes \alpha^{-1})+7$. Therefore $b+r \leq 4$. 
\end{proposition}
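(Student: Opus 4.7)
The plan is to mimic the strategy of Lemma \ref{lemma1}: twist $M(-B-D_1)$ into a line bundle that contributes to the Clifford index of $C$, and then apply the bound $\mathrm{Cliff}(C) \geq k-3$ (already invoked in Lemma \ref{lemma1}).

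First I would apply Riemann--Roch to $M(-B-D_1)$. Its degree is $2g-2+r-k-b-s_1$, and its $h^0$ equals $h^0(M)-1$ by hypothesis; combining this with \eqref{h0m-b} yields
\[
h^1(M(-B-D_1)) = h^0(F\otimes \alpha^{-1}) + b + s_1 - 1.
\]
Serre duality rewrites this quantity as $h^0(F\otimes \alpha^{-1}(B+D_1))$. I would set $L := F\otimes \alpha^{-1}(B+D_1)$; it has degree $k-r+b+s_1$ and $h^0(L) = h^0(F\otimes\alpha^{-1})+b+s_1-1$.

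Next I would check that $L$ contributes to the Clifford index, i.e.\ that $h^0(L),\,h^1(L)\geq 2$. The formula for $h^0(L)$ together with $s_1\geq 3$ gives the first inequality. For the second, Serre duality gives $h^1(L)=h^0(M(-B-D_1))=h^0(M)-1$, and the strengthened hypothesis $g+r\geq k+4$ yields $h^0(M)\geq 3$ via \eqref{h0m-b}. This verification is the only place where $g+r\geq k+4$ (rather than the weaker $g+r\geq k+3$ used before) is needed, and it is the conceptual heart of the argument.

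Then Clifford's inequality $\mathrm{Cliff}(L)\geq k-3$ reads
\[
(k-r+b+s_1) - 2\bigl(h^0(F\otimes \alpha^{-1}) + b + s_1 - 2\bigr) \geq k-3,
\]
which simplifies directly to $s_1 \leq 7 - r - b - 2h^0(F\otimes \alpha^{-1})$. Combined with $s_1\geq 3$ this forces $b+r+2h^0(F\otimes\alpha^{-1})\leq 4$, and hence in particular $b+r\leq 4$. I do not anticipate a serious obstacle: the argument is a direct variant of the proof of Lemma \ref{lemma1}, and the only delicate point is the sharpness of the hypothesis $g+r\geq k+4$ in guaranteeing that $L$ contributes to the Clifford index.
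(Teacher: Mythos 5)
Your proposal is correct and is essentially identical to the paper's own proof: both consider the line bundle $F(B+D_1)\otimes\alpha^{-1}$ (Serre dual to $M(-B-D_1)$), verify that it contributes to the Clifford index using $s_1\geq 3$ for $h^0\geq 2$ and $g+r\geq k+4$ for $h^1\geq 2$, and then apply $\mathrm{Cliff}(C)\geq k-3$ to obtain $s_1\leq 7-r-b-2h^0(F\otimes\alpha^{-1})$ and hence $b+r\leq 4$. No gaps.
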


\begin{proof}
By Riemann Roch we have 
$$h^0(F(B+D_1) \otimes \alpha^{-1})= h^0(F \otimes \alpha^{-1}) + b + s_1 -1 \geq s_1-1 \geq 2,$$
$$h^1(F(B+D_1) \otimes \alpha^{-1}) = h^0(M(-B-D_1)) = h^0(M) -1 =$$
$$= h^0(F \otimes \alpha^{-1}) + g -2 -k+r\geq g+r - k -2 \geq 2,$$ 
since $g+r \geq k+4$.  Hence $F(B+D_1) \otimes \alpha^{-1}$ contributes to the Clifford index and we have 
$cliff(F(B+D_1) \otimes \alpha^{-1}) = k-r+b+s_1 - 2( h^0(F(B+D_1) \otimes \alpha^{-1}) -1) = k-r-b-s_1 -2 h^0(F \otimes \alpha^{-1}) +4 \geq k-3.$
So we have $s_1 \leq -r-b+7 -2 h^0(F \otimes \alpha^{-1}) $.  Thus $b+r \leq 7-s_1 -2 h^0(F \otimes \alpha^{-1})  \leq 4$. 
\end{proof}
\begin{corollary}
\label{cor4}
 If $g+r \geq k+4$, $r \leq k+1$, $b+r \geq 5$, then $s_1 \leq 2$, hence the morphism induced by $|M(-B)|$ is either birational on its image, or it has degree 2 on its image. 

\end{corollary}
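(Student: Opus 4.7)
The plan is to deduce the corollary from Proposition \ref{b+r} by exhibiting, for any map of degree $d\ge 3$ onto its image, a divisor $D_1$ of degree $d$ satisfying the hypothesis of that proposition and then using the numerical bound $b+r\ge 5$ to reach a contradiction.

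First I note that under the running hypotheses $g+r\ge k+4$ and $r\le k+1$, formula \eqref{h0m-b} gives $h^0(M)\ge g+r-k-1\ge 3$, so the linear system $|M(-B)|=|M|$ is at least a pencil and defines a morphism $\phi\colon C\to \bP^n$ with $n+1=h^0(M(-B))$, whose image is non-degenerate (i.e.\ not contained in any hyperplane), since $|M(-B)|$ is base-point-free by definition of $B$.

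Next I suppose, for contradiction, that $\phi$ has degree $d\ge 3$ onto its image. For a general point $p$ of $\phi(C)$, let $D_1=\phi^{-1}(p)$, an effective divisor of degree $s_1=d\ge 3$. I then claim that
\[
h^0(M(-B-D_1))=h^0(M(-B))-1=h^0(M)-1.
\]
Indeed, a section $s\in H^0(M(-B))$ vanishes on $D_1$ if and only if the corresponding hyperplane $H_s\subset\bP^n$ contains $\phi(D_1)=\{p\}$; because $\phi(C)$ is non-degenerate, the hyperplanes through $p$ form a linear subspace of codimension exactly one in $|M(-B)|$, yielding the claimed equality.

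Now I apply Proposition \ref{b+r} to $D_1$. It gives
\[
s_1\le 7-r-b-2h^0(F\otimes\alpha^{-1})\le 7-(b+r)\le 7-5=2,
\]
contradicting $s_1\ge 3$. Therefore $\phi$ has degree at most $2$ on its image, i.e.\ it is either birational onto its image or of degree $2$. The only delicate point is the equality $h^0(M(-B-D_1))=h^0(M)-1$, but this is a routine consequence of the non-degeneracy of the image, so no serious obstacle is expected.
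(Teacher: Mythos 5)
Your proof is correct and takes essentially the same route as the paper: the paper's own proof is the one-line application of Proposition \ref{b+r}, giving $s_1 \le 7-(r+b)-2h^0(F\otimes\alpha^{-1})\le 2$, with the link between the degree of the morphism and the existence of a fibre divisor $D_1$ of degree $s_1\ge 3$ satisfying $h^0(M(-B-D_1))=h^0(M(-B))-1$ left implicit. You simply spell out that standard link (general fibre of $\phi$, base-point-freeness of $|M(-B)|$), which is fine and introduces no gap.
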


\begin{proof}
By Proposition \ref{b+r} we have $s_1 \leq -(r +b) +7 -2h^0(F \otimes \alpha^{-1}) \leq 2$, so  the morphism induced by $|M(-B)|$ has degree at most $2$ on its image. 
\end{proof}

Assume now that $b+r \leq 4$. 

Consider an effective divisor $D_1$ as above of maximal degree $s_1 \geq 3$ with the property that $h^0(M(-B-D_1)) = h^0(M(-B)) -1 = h^0(M) -1 = n$.  

Then clearly the linear system $|M(-B-D_1)|$ is base point free and $h^0(M(-B-D_1)) = n \geq 2$, if $g+r \geq k+4$. Under this assumptions, in Proposition \ref{b+r}, we have shown that  $s_1 \leq 7 -(r+b) - 2h^0(F\otimes \alpha^{-1})$. 

We want to study the linear system $|M(-B-D_1)|$ and give sufficient conditions ensuring that the map associated with $|M(-B-D_1)|$ has at most degree 2 on its image.  Assume that $g+r \geq k+5$. 
We have the following 

\begin{proposition}
\label{b+r+s}
Assume $g+r \geq k+5$, $b+r \leq 4$, $r \leq k+1$. With the above notation,  assuming $s_1 \geq 3$, let $D_2$ be an effective divisor of degree $s_2>0$ such that $h^0(M(-B-D_1-D_2)) = h^0(M(-B-D_1)) -1 = n-1$. Then $ s_2\leq 9-(r+b+s_1) -2 h^0(F \otimes \alpha^{-1}) \leq 6-(r+b).$
\end{proposition}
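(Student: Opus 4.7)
The plan is to mimic almost verbatim the proof of Proposition \ref{b+r}, now applied one step further down, to the effective divisor $B+D_1+D_2$. The key line bundle to analyse is $N:=F(B+D_1+D_2) \otimes \alpha^{-1}$, which by definition of $M$ and Serre duality satisfies $h^1(N) = h^0(K_C \otimes N^{-1}) = h^0(M(-B-D_1-D_2)) = n-1$.

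First I would compute $h^0(N)$ and $h^1(N)$ explicitly. Using \eqref{h0m-b} to substitute $n = h^0(M)-1 = h^0(F\otimes \alpha^{-1}) + g - 2 + r - k$, we get
\[
h^1(N) = h^0(F\otimes \alpha^{-1}) + g - 3 + r - k,
\]
and Riemann--Roch on the degree $\deg(N) = k + b + s_1 + s_2 - r$ yields
\[
h^0(N) = h^0(F\otimes \alpha^{-1}) + b + s_1 + s_2 - 2.
\]

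Next I would verify that $N$ contributes to the Clifford index of $C$, i.e.\ that $h^0(N) \geq 2$ and $h^1(N) \geq 2$. For $h^1(N)$, this is where the hypothesis $g+r \geq k+5$ gets used: it forces $h^1(N) \geq g+r-k-3 \geq 2$. For $h^0(N)$, since $s_1 \geq 3$ and $s_2 \geq 1$ and $h^0(F\otimes \alpha^{-1}) \geq 0$, one has $h^0(N) \geq 3 + 1 - 2 = 2$.

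Then I apply Clifford, recalling that the Clifford index of $C$ is at least $k-3$:
\[
\mathrm{cliff}(N) = \deg(N) - 2(h^0(N)-1) = k - r - b - s_1 - s_2 - 2h^0(F\otimes \alpha^{-1}) + 6 \geq k-3,
\]
which immediately rearranges to $s_2 \leq 9 - (r+b+s_1) - 2h^0(F\otimes \alpha^{-1})$. The second, weaker, inequality $s_2 \leq 6-(r+b)$ follows on dropping $2h^0(F\otimes \alpha^{-1}) \geq 0$ and using $s_1 \geq 3$. The whole argument is essentially routine once one identifies $N$ as the correct Serre-dual bundle; the only real content is checking the two inequalities needed for Clifford to apply, and in particular seeing exactly where the hypothesis $g+r \geq k+5$ (as opposed to $g+r \geq k+4$ in Proposition \ref{b+r}) is consumed to guarantee $h^1(N) \geq 2$.
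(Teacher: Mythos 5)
Your proposal is correct and follows essentially the same route as the paper: identify $N=F(B+D_1+D_2)\otimes\alpha^{-1}$ as the Serre dual of $M(-B-D_1-D_2)$, check via $g+r\geq k+5$ and $s_1\geq 3$ that $h^1(N)\geq 2$ and $h^0(N)\geq 2$ so that $N$ contributes to the Clifford index, and then apply the bound $\mathrm{cliff}(C)\geq k-3$. The computations match the paper's line by line.
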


\begin{proof}
 Assume that $D_2$ is an effective divisor of degree $s_2$ such that $h^0(M(-B-D_1-D_2))= h^0(M(-B-D_1)) -1 = n-1.$ Then we have 

\begin{gather*}
h^1(F(B+D_1+D_2) \otimes \alpha^{-1}) = h^0(M(-B-D_1-D_2)) =n-1=\\
=h^0(F \otimes \alpha^{-1}) + g-1 +r-k -2 \geq 2,
\end{gather*}
since $g+r \geq k+ 5$. 
\begin{gather*}
h^0(F(B+D_1+D_2) \otimes \alpha^{-1}) = h^0(M(-B-D_1-D_2)) + k+s_1+s_2+b-r -g+1 =\\
= h^0(F \otimes \alpha^{-1}) + s_1+s_2 +b -2 \geq s_2+1,
\end{gather*}
since $s_1 \geq 3. $ So $F(B+D_1+D_2) \otimes \alpha^{-1}$ contributes to the Clifford index, hence we have 
\begin{gather*}
cliff(F(B+D_1+D_2) \otimes \alpha^{-1}) = k + b + s_1 + s_2 - r - 2(h^0(F(B+D_1+D_2) \otimes \alpha^{-1}) -1) \\
= k-(r+b+s_1+s_2) + 6 -2 h^0(F \otimes \alpha^{-1}) \geq k-3,
\end{gather*}
if and only if 
$$s_2 \leq 9-(r+b+s_1) -2 h^0(F \otimes \alpha^{-1}) \leq 6-(r+b) -2 h^0(F \otimes \alpha^{-1}) \leq 6-(r+b).$$
\end{proof}


\begin{corollary}
\label{cor5'}
If $g+r \geq k+5$, $b +r = 4$, $r \leq k+1$, then the morphism induced by $|M(-B)|$ is either birational or it has degree 2 on its image. 
\end{corollary}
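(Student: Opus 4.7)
The plan is to argue by contradiction, iterating the Clifford-index estimates of Propositions~\ref{b+r} and~\ref{b+r+s} exactly as in Corollary~\ref{cor4}, but one step further. Write $\phi\colon C \to \mathbb{P}^{n}$ for the morphism induced by the base point free linear system $|M(-B)|$, and assume toward a contradiction that $\phi$ has degree $d \geq 3$ onto its image.

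First I would take a general point $q \in \phi(C)$, in the open locus where $\phi$ is unramified with reduced fibre, and set $D_1 := \phi^{-1}(q)$, an effective divisor of degree $d$ with $\phi(D_1)=\{q\}$ a single reduced point. Then $D_1$ imposes one condition on $H^0(M(-B))$, so
\[
h^0(M(-B-D_1)) = h^0(M(-B)) - 1 = n.
\]
Since $s_1 := \deg D_1 = d \geq 3$, Proposition~\ref{b+r} applies (its hypotheses $g+r \geq k+4$ and $r \leq k+1$ are implied by ours), and with $b+r=4$ it yields
\[
d = s_1 \leq 7-(r+b)-2h^0(F\otimes\alpha^{-1}) = 3 - 2h^0(F\otimes\alpha^{-1}),
\]
which forces $h^0(F\otimes\alpha^{-1}) = 0$ and $d = s_1 = 3$.

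Next I would pick a second general point $q' \in \phi(C)$ with $q' \neq q$ and set $D_2 := \phi^{-1}(q')$, an effective divisor of degree $3$. Since $q$ and $q'$ are two distinct reduced points of $\mathbb{P}^n$, they impose two independent linear conditions on $H^0(M(-B))$, so
\[
h^0(M(-B-D_1-D_2)) = n - 1 = h^0(M(-B-D_1)) - 1.
\]
The hypothesis $g+r \geq k+5$ is precisely what allows one to apply Proposition~\ref{b+r+s}, which then gives
\[
3 = s_2 \leq 9 - (r+b+s_1) - 2h^0(F\otimes\alpha^{-1}) = 9 - 4 - 3 - 0 = 2,
\]
a contradiction. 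Hence $d \leq 2$, proving the corollary. I do not foresee any real difficulty: all the substance is in the Clifford-theoretic Propositions~\ref{b+r} and~\ref{b+r+s}, and the only thing one should verify, routinely, is that the two general fibres really produce the expected drop in $h^0$, which is automatic since two distinct reduced points in $\mathbb{P}^n$ always impose independent conditions on hyperplanes.
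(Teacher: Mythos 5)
Your proof is correct and rests on exactly the same two ingredients as the paper's, namely the Clifford-index bounds of Propositions~\ref{b+r} and~\ref{b+r+s}; the only difference is presentational, in that you run the argument as an explicit contradiction on two general fibres of $\phi_{|M(-B)|}$, whereas the paper takes $D_1$ of maximal degree $s_1$, bounds $s_2\leq 2$ for the base point free system $|M(-B-D_1)|$, and then transfers the degree bound back to $|M(-B)|$. Your version has the minor merit of spelling out why the two fibres impose exactly one condition each (a point the paper leaves implicit), but it is not a genuinely different route.
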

\begin{proof}
By Proposition \ref{b+r+s} we know that $s_2 \leq 2$, hence the morphism induced by $|M(-B-D_1)|$ is either birational, or it has degree 2 on its image, hence the same holds for the morphism induced by $|M(-B)|$. 

\end{proof}

Now we repeat the strategy explained above for the cases $b+r \leq 3$. Set $D_l$ an effective divisor of maximal degree $s_l$, $l=1,...,5$, such that $h^0(M(-B-D_1-...-D_l) )= n+1-l  \geq 2$. This holds if $g+r \geq k+3 + l$. 

\begin{proposition}
\label{b+s_1+s_l}
Assume that $g+r \geq k+4 + m$, $b+r \leq 5-m$, $r \leq k+1$, $m \geq 1$.
Assume $s_1,...,s_{m} \geq 3$, let $D_{m+1}$ be an effective divisor of degree $s_{m+1}>0$ such that $h^0(M(-B-D_1-...-D_{m+1})) = h^0(M(-B-D_1-...-D_{m})) -1 = n-m$. Then $ s_{m+1}\leq 5+2(m+1)-(r+b+s_1+...+s_{m}) -2 h^0(F \otimes \alpha^{-1}) \leq 7-m-(r+b)-2 h^0(F \otimes \alpha^{-1}) .$
\end{proposition}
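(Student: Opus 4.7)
The plan is to iterate the Riemann--Roch plus Clifford index argument that was used in Propositions \ref{b+r} and \ref{b+r+s}, now applied to the line bundle
\[
L := F(B+D_1+\cdots+D_{m+1}) \otimes \alpha^{-1},
\]
which is Serre dual (up to tensoring with $K_C$) to $M(-B-D_1-\cdots-D_{m+1})$.

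First I would compute the two relevant cohomological quantities. By construction and Serre duality,
\[
h^1(L) = h^0(M(-B-D_1-\cdots-D_{m+1})) = n-m = h^0(F\otimes\alpha^{-1}) + g-2+r-k-m,
\]
and Riemann--Roch gives
\[
h^0(L) = h^1(L) + \deg L - g+1 = h^0(F\otimes\alpha^{-1}) + b + s_1+\cdots+s_{m+1} - 1 - m.
\]
Next I would check that $L$ contributes to the Clifford index of $C$, i.e.\ that both $h^0(L) \geq 2$ and $h^1(L)\geq 2$. The inequality $h^1(L)\geq 2$ is exactly the hypothesis $g+r \geq k+4+m$. For $h^0(L)\geq 2$, the assumptions $s_1,\ldots,s_m\geq 3$ and $s_{m+1}\geq 1$ give $s_1+\cdots+s_{m+1} \geq 3m+1$, so $h^0(L) \geq 3m+1 -1 - m = 2m \geq 2$ for $m\geq 1$.

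Once $L$ is known to contribute to the Clifford index, we may apply $\operatorname{cliff}(L) \geq \operatorname{cliff}(C) \geq k-3$ (recalling that the Clifford index of $C$ equals $k-2$ or $k-3$). A direct computation yields
\[
\operatorname{cliff}(L) = \deg L - 2(h^0(L)-1) = k - (r+b+s_1+\cdots+s_{m+1}) + 4 + 2m - 2 h^0(F\otimes\alpha^{-1}).
\]
Imposing $\operatorname{cliff}(L) \geq k-3$ and solving for $s_{m+1}$ gives
\[
s_{m+1} \leq 5 + 2(m+1) - (r+b+s_1+\cdots+s_m) - 2h^0(F\otimes\alpha^{-1}),
\]
which is the first inequality. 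The second inequality then follows by using $s_1+\cdots+s_m \geq 3m$.

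No step is especially delicate: the real content is bookkeeping. The only place one has to be careful is the verification that $L$ contributes to the Clifford index, because this is where the hypothesis $g+r\geq k+4+m$ (controlling $h^1(L)$) and the hypothesis $s_1,\ldots,s_m\geq 3$ (controlling $h^0(L)$) enter. Once both inequalities $h^0(L), h^1(L) \geq 2$ are in place, the Clifford bound is the only ingredient used, exactly as in the base cases $m=0,1$ already established.
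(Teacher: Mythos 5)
Your proposal is correct and follows essentially the same route as the paper: Serre duality and Riemann--Roch to compute $h^1$ and $h^0$ of $F(B+D_1+\cdots+D_{m+1})\otimes\alpha^{-1}$, verification that it contributes to the Clifford index using $g+r\geq k+4+m$ and $s_1,\ldots,s_m\geq 3$, and then the bound $\operatorname{cliff}\geq k-3$ to isolate $s_{m+1}$. All the intermediate quantities you compute agree with the paper's.
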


\begin{proof}
We have $deg(F(B+D_1+...+D_{m+1}) \otimes \alpha^{-1})= k+b +s_1+...+s_{m+1} -r$,
\begin{gather*}
h^1(F(B+D_1+...+D_{m+1}) \otimes \alpha^{-1}) = h^0(M(-B-D_1-...-D_{m+1})) = n-m,\\
=h^0(F \otimes \alpha^{-1}) + g-1 +r-k -(m+1) \geq 2,
\end{gather*}
since $g+r \geq k+ 4+m$,
\begin{gather*}
h^0(F(B+D_1+...D_{m+1}) \otimes \alpha^{-1}) = \\
=h^0(F \otimes \alpha^{-1}) + s_1+...+s_{m+1} +b -(m+1) \geq  2m \geq 2,
\end{gather*}
since $s_i \geq 3, $ $i=1,...,m$, $s_{m+1} \geq 1$, $m \geq 1$. So $F(B+D_1+..+D_{m+1}) \otimes \alpha^{-1}$ contributes to the Clifford index, hence we have 
\begin{gather*}
cliff(F(B+D_1+..+D_{m+1}) \otimes \alpha^{-1}) = \\
=k-(r+b+s_1+...+s_{m+1}) + 2(m+2) -2 h^0(F \otimes \alpha^{-1}) \geq k-3,
\end{gather*}
if and only if 
$$s_{m+1} \leq 5+2(m+1)-(r+b+s_1+...+s_{m}) -2 h^0(F \otimes \alpha^{-1}) \leq 7-m-(r+b) -2 h^0(F \otimes \alpha^{-1}) .$$
\end{proof}

\begin{corollary}
\label{general}
If $g+r \geq k+4+m$, $b +r = 5-m$, then the morphism induced by $|M(-B)|$ is either birational or it has degree 2 on its image. 
\end{corollary}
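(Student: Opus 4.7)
My plan is to extend the strategy from Corollary \ref{cor5'} to $m+1$ iterations, using Proposition \ref{b+s_1+s_l} as the base estimate. The hypothesis $g+r \geq k+4+m$ guarantees precisely that the iterative construction produces divisors $D_1, \ldots, D_{m+1}$ of maximal degrees $s_1, \ldots, s_{m+1}$, each successively lowering $h^0$ by $1$ while preserving $h^0(M(-B-D_1-\cdots-D_l)) \geq 2$. Moreover, maximality of $D_l$ forces $|M(-B-D_1-\cdots-D_l)|$ to be base-point-free: any base point $x$ could be absorbed into $D_l$ without changing $h^0$, contradicting maximality. Thus at each stage one is dealing with a base-point-free linear system.

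The key elementary input I will invoke is that for a base-point-free linear system $|L|$, the maximal degree of an effective divisor $D$ with $h^0(L(-D)) = h^0(L) - 1$ equals the degree on the image of the associated morphism $\phi_{|L|}$. Indeed, sections vanishing on such a $D$ cut out a codimension-one subspace of $H^0(L)$, hence correspond to hyperplanes of $\bP(H^0(L)^*)$ through a single point $p$; nondegeneracy of $\phi_{|L|}(C)$ then forces $D$ to be the scheme-theoretic fiber $\phi_{|L|}^{-1}(p)$, of degree $\deg \phi_{|L|}$. Applied at each stage, this identifies $s_l$ with $\deg \phi_{|M(-B-D_1-\cdots-D_{l-1})|}$.

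The argument then splits into two cases. If $s_1, \ldots, s_m \geq 3$, Proposition \ref{b+s_1+s_l} combined with $b+r = 5-m$ gives $s_{m+1} \leq 7 - m - (r+b) - 2h^0(F \otimes \alpha^{-1}) \leq 2$, so $\phi_{|M(-B-D_1-\cdots-D_m)|}$ has degree at most $2$ on its image; otherwise some $s_j \leq 2$ with $j \leq m$, and already $\phi_{|M(-B-D_1-\cdots-D_{j-1})|}$ has degree at most $2$. In either case there exists $j' \leq m$ such that $\phi_{|M(-B-D_1-\cdots-D_{j'})|}$ has degree at most $2$ on its image. To conclude, I would use that the sublinear inclusion $|M(-B-D_1-\cdots-D_{j'})| \subset |M(-B)|$ realises $\phi_{|M(-B-D_1-\cdots-D_{j'})|}$ as a linear projection composed with $\phi_{|M(-B)|}$, so $\deg \phi_{|M(-B)|}$ divides $\deg \phi_{|M(-B-D_1-\cdots-D_{j'})|} \leq 2$, as required. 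The only non-mechanical point is the identification $s_l = \deg \phi$, which is a standard dictionary between fibers of a morphism and codimension-one hyperplane subsystems; the rest is bookkeeping already implicit in Corollaries \ref{cor4} and \ref{cor5'}.
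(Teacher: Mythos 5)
Your proposal is correct and follows essentially the same route as the paper: apply Proposition \ref{b+s_1+s_l} with $b+r=5-m$ to get $s_{m+1}\leq 2$ (or observe that some earlier $s_j\leq 2$), translate this into a degree bound for the morphism attached to the subsystem, and transfer that bound to $\phi_{|M(-B)|}$ via the linear-projection factorisation; you merely make explicit the steps the paper leaves implicit. One small caveat: your claim that the maximal $s_l$ \emph{equals} $\deg\phi$ is not quite right --- it can exceed it when the map contracts a divisor or the image is singular (cf.\ Proposition \ref{r>k+1}(3)) --- but your argument only uses the inequality $s_l\geq\deg\phi$, i.e.\ $s\leq 2\Rightarrow\deg\phi\leq 2$, so nothing breaks.
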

\begin{proof}
By Proposition \ref{b+s_1+s_l} we know that $s_{m+1} \leq 2$, hence the morphism induced by $|M(-B-D_1-...-D_{m})|$ is either birational, or it has degree 2 on its image, hence the same holds for the morphism induced by $|M(-B)|$. 

\end{proof}

\section{Quadrics and estimate on the second fundamental form}\label{se4}

In this section we explain how to construct certain quadrics contained in $I_2(K_C \otimes \alpha)$ on which we are able to compute the second fundamental form of the Prym map $\rho_P$. Here we adapt the techniques introduced in \cite{cfg} and  \cite{fp} for the Torelli map to the case of the Prym maps. 

\subsection{The quadrics and the second fundamental form}\label{se41}
With the above notation, assume that  the morphism $f$ induced by $|M(-B)|$ is  birational  on its image.  Set $d = deg(M(-B))= 2g-2+r-k-b$, $n+1 = h^0(M(-B)) = h^0(M)= h^0(F \otimes \alpha^{-1}) +g-1+r-k $, and  fix now and for all two indipendent  sections $x,y \in H^0(F).$ Take a section $t \in H^0(M(-B))$  and consider the associated divisor $D(t)\in |M(-B)|.$ Assume that $t$ is such that:
\begin{enumerate}
\item $D(t)\cap (Z\cup B)=\emptyset, $ where $Z$ is the ramification divisor of the map $\phi_{\langle x, y \rangle}: C \ra \bP^1$. 
\item $D(t)=p_1+\dots +p_d$, $p_i\neq p_j$ if $i\neq j.$
\item The points $p_i$ are in general linear position: for any group of or $n$ distinct points $p_{i_1},\dots,  p_{i_n},$
we have $H^0(M(-B-(p_{i_1}+\dots  +p_{i_n})))=\langle t \rangle$.
\end{enumerate}
The last condition follows for instance  from the uniform lemma of Castelnuovo (see e.g.  \cite[Ch.3]{acgh}) since $f:C\to \bP^n$ is birational onto its image.

Consider the exact sequence induced by $t$
$$0\to \cO_C\stackrel{t}\to M(-B)\to M(-B)_D\to 0.$$
We  get  $$M(-B)_D\cong \sum\bC_{p_i},$$
where the last isomorphism follows from the choice of local trivializations of $M(-B).$
Let $W\subset H^0(M(-B))$ be complementary to $t:$ $H^0(M(-B))=\langle t \rangle \oplus W$, so that $\dim W=n$. 
Consider the induced injection $j: W\to H^0(\oplus \bC_{p_i})=\bC^d.$ 
 We can rewrite the linear uniform condition.
For any $s\in W$, $s\neq0$,  then the vector 
$j(s)=(a_1,\dots, a_d)$
has at most $n-1$ coordinates that are zero.

Let $\tau \in H^0({\cO}_C(B))$, such that $D(\tau) = B$, then  $\forall s \in W$, $\tilde{s} = \tau s$ and $\tilde{t} = \tau t$,  are sections in $H^0(M)$. 
Let $I_2(K_C \otimes \alpha)\subset S^2 H^0(K_C\otimes \alpha))$  be the kernel of the multiplication map 
$$m: S^2 H^0(K_C\otimes \alpha) \ra H^0(K_C^{\otimes 2}(R)).$$
Consider the quadric 
$$Q_{s} = x\tilde{t} \odot y\tilde{s} - x\tilde{s} \odot y\tilde{t}$$
Clearly $Q_s \in I_2(K_C \otimes \alpha),$ $ \forall s \in W$. 
Denote by $\pi: \tilde {C} \ra C$ the $2:1$ cover given by $\alpha$, and by $\psi: C \ra \bP^1$ the morphism induced by $\langle s,t\rangle$. Then clearly $\{p_1,...,p_d\} = Z(t) = \psi^{-1}(1:0)$. Set $\{T_i, \sigma(T_i)\} = \pi^{-1}(p_i)$, where $\sigma$ denotes the involution of the double cover $\pi$. For all $i = 1,...,d$, set $v_i:= \xi_{T_i} + \xi_{\sigma(T_i)} \in H^1(T_{\tilde C})^+ \cong H^1(T_C(-R))$.

Denote by $V = \langle v_1,...,v_d \rangle \subset H^1(T_{\tilde C})^+$, and set

\begin{equation}
\label{pi}
\beta: W\to S^2V^\ast, \ \beta(s)(v \odot w) = \rho_P(Q_s)( v \odot w) =\tilde{ \rho}(\pi^*(Q_s))(v \odot w), \ \forall v,w \in V.
\end{equation}
where the last equality follows from \eqref{rho(Q)}. 

The following  result has been proved in  \cite[(3.2),(3.3)]{cframified}, with the variant that all the quadrics $Q_s$, $s \in  W$, are taken into account.
\begin{theorem}
\label{diago}
Let $x_1,\dots x_d$ be the basis of $V^\ast$ dual to the basis $\{v_1,...,v_d\}$ of $V$. Then $$\beta(s)= \lambda \sum_{i=1}^d a_ix_i^2$$ where $\lambda\neq 0$ is a constant independent of $s$,  
$j(s)=(a_1,\dots,a_d)$ where $ j:W\to \bC^d$ is the evaluation map.
The quadrics $\beta(s)$ are simultaneously diagonalized and  for any  $s\neq 0,$ 
${\rm{rank}} (\beta(s))\geq d-n+1.$
\end{theorem}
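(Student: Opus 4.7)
The plan is to reduce Theorem~\ref{diago} to the one-quadric residue computation of \cite[(3.2),(3.3)]{cframified} applied now to the whole family $\{Q_s\}_{s\in W}$. By the identity \eqref{rho(Q)} we have $\beta(s)(v_i\odot v_j) = \tilde\rho(\pi^{\ast}Q_s)(v_i\odot v_j)$, and since $v_i = \xi_{T_i}+\xi_{\sigma T_i}$ it suffices to evaluate $\tilde\rho(\pi^{\ast}Q_s)$ on Schiffer-variation pairs supported at the preimages of the zeros of $t$. The main input is the standard residue formula of \cite{cfg} for the second fundamental form of the Torelli map: for $\tilde Q\in I_2(K_{\tilde C})$ and $P,P'\in\tilde C$, $\tilde\rho(\tilde Q)(\xi_P\odot\xi_{P'})$ is read off from the local $2$-jets of the representing symmetric bilinear form at $P$ and $P'$.

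For the off-diagonal vanishing ($i\neq j$), I would use that
\[ \pi^{\ast}Q_s = \pi^{\ast}(x\tilde t)\odot\pi^{\ast}(y\tilde s) - \pi^{\ast}(x\tilde s)\odot\pi^{\ast}(y\tilde t), \]
and $\pi^{\ast}\tilde t = \pi^{\ast}(\tau t)$ vanishes to first order at every $T_k,\sigma T_k$ (the simplicity of the zeros follows from the transversality condition $D(t)\cap(B\cup Z)=\emptyset$ and $D(t)=\sum p_i$ with distinct $p_i$). Plugging into the residue formula at pairs of Schiffer variations supported on the four distinct points $T_i,\sigma T_i,T_j,\sigma T_j$, one obtains a sum of products of local factors at each point, and the antisymmetry of $Q_s$ under the exchange $t\leftrightarrow s$ forces them to cancel in pairs. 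This is exactly the cancellation worked out for a single $Q_s$ in \cite[(3.2)]{cframified}.

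For the diagonal entries, the same residue computation at $T_i$ and $\sigma(T_i)$ yields $\beta(s)(v_i^2)$. Choosing local trivializations of $M,F,\alpha$ near $p_i$ and a local coordinate $z$ at $p_i$ in which $t=z$, the only $s$-dependent factor is the local value $s(p_i)$, which by construction is the $i$-th coordinate $a_i$ of $j(s)\in\bC^d$. All remaining ingredients---the values $x(p_i),y(p_i)$, the non-vanishing of $d(y/x)$ at $p_i$ guaranteed by hypothesis~(1) (i.e.\ $p_i\notin Z$), the trivializations of $M$ and $\alpha$, and the value of $\tau$ at $p_i$ (non-zero since $D(t)\cap B=\emptyset$)---combine into a single non-zero scalar $\lambda$ that is manifestly independent of $s$. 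Hence $\beta(s) = \lambda\sum_{i=1}^d a_i x_i^2$ in the dual basis $\{x_i\}$.

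The rank bound is then immediate: $\operatorname{rank}\beta(s) = \#\{i : a_i\neq 0\}$, and condition~(3) (the uniform position lemma of Castelnuovo, applicable because $f\colon C\to\bP^n$ is birational onto its image) guarantees that for every non-zero $s\in W$ the vector $j(s)$ has at most $n-1$ zero coordinates, giving $\operatorname{rank}\beta(s)\geq d-n+1$. The main obstacle is not the off-diagonal cancellation itself (which is done in \cite{cframified}), but rather the clean separation of the residue formula into an $s$-independent factor $\lambda$ times the local evaluation $a_i$ of $s$ at $p_i$; once the residue formula is written out with $t$ used as a local coordinate, this separation is essentially automatic, and it is precisely what upgrades the one-quadric statement of \cite{cframified} to the simultaneous-diagonalisation statement needed here.
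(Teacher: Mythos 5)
Your proposal is correct and follows essentially the same route as the paper: off-diagonal vanishing is delegated to \cite[(3.2)]{cframified} via $\pi^*(Q_s)(T_i,T_j)=Q_s(p_i,p_j)=0$, the diagonal entries come from a local computation at $p_i$ in which the only $s$-dependent factor is $s(p_i)=a_i$, and the rank bound follows from the uniform position property of $j(s)$. The only cosmetic difference is that the paper packages the diagonal computation through the second Gaussian map factorization $\mu_2(Q_s)=\mu_{1,F}(x\wedge y)\,\mu_{1,M}(\tilde s\wedge\tilde t)$ and the identity $\mu_1(\tilde s\wedge\tilde t)=\tau^2(s't-st')$, which is exactly the residue separation you describe.
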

\begin{proof} 
By  \cite[(3.2)]{cframified}, for $i \neq j$ we have: 
$$ \pi^*(Q_s)(T_i, T_j) = Q_s(p_i, p_j) =0.$$

Hence, 
$$\rho_P(Q) (v_i \odot v_j) = \tilde{\rho}(\pi^*(Q_s))(v_i \odot v_j) = 0, \ \forall i \neq j,$$
$$ \rho_P(Q) (v_i \odot v_i)=\tilde{\rho}(\pi^*(Q_s))(v_i \odot v_i) = c \mu_2(Q_s)(p_i),$$
where $\mu_2: I_2(K_C \otimes \alpha) \ra H^0(4K_C(R))$ is the second Gaussian map of the bundle $K_C \otimes \alpha$ (see \cite[section 2]{cframified}). 
 A local computation gives $\mu_2(Q_s) = \mu_{1,F}(x \wedge y) \mu_{1, M}(\tilde{s} \wedge \tilde{t})$, where, for a line bundle $L$, $\mu_{1,L}: \Lambda^2 H^0(L) \ra H^0(K_C \otimes L^2)$ denotes the first Gaussian map of $L$ (see e.g. \cite[Lemma 2.2]{cf2}).  In local coordinates one computes $$ \mu_1(\tilde{s} \wedge \tilde{t}) = (\tau s)' (\tau t) -  (\tau s) (\tau t)' = (\tau' s + \tau s') (\tau t) - (\tau s)( \tau' t + \tau t') = \tau^2(s't-st').$$

Hence $\mu_1(\tilde{s} \wedge \tilde{t}) (p_i) = \tau^2(p_i) (-s(p_i)t'(p_i)) =k s(p_i) = ka_i,$
where $k$ is a  non zero constant, that does not depend on $s$, by the assumptions on the section $t$. So 
$\mu_2(Q_s)(p_i) = (\mu_1(x \wedge y) \mu_1(\tilde{s} \wedge \tilde{t}))(p_i) = \lambda a_i$, for a non zero constant $\lambda$, which is independent of $s$.

 \end{proof}
\subsection{The zero locus of the quadrics and the estimate}
For an element $z \in V$, write $z = \sum_{i=1}^d z_i v_i$ and denote by $[z]:=[z_1,...,z_d] \in  \bP^{d-1} \cong  \bP(V)$.
Consider the locus
$$Z=\{[z] \in \bP^{d-1}: \beta(s)(z \odot z)=\tilde{\rho}(\pi^*(Q_s))(z \odot z)=0,  \ \forall s\in W\}.$$ 
 First we have the following
\begin{lemma} Set $H=\{[z] \in \bP^{d-1}: z_i=0,\ i>n\},$ then $Z \cap H=\emptyset,$  therefore  $\dim Z=  d-n-1.$ 
 \end{lemma}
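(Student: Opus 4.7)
The plan is to first prove $Z \cap H = \emptyset$ directly from the linear general position condition on $p_1,\dots,p_d$, and then to extract $\dim Z = d - n - 1$ from this disjointness by a standard intersection--dimension argument in $\bP^{d-1}$.

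By Theorem \ref{diago}, writing $j(s) = (a_1,\dots,a_d)$ and $z = \sum_{i=1}^d z_i v_i$, the value $\beta(s)(z \odot z)$ equals $\lambda \sum_{i=1}^d a_i z_i^2$. I would first reinterpret $Z$ as the set of $[z]\in\bP^{d-1}$ such that the vector $(z_1^2,\dots,z_d^2)$ is orthogonal to $j(W)\subset\bC^d$ with respect to the standard pairing.

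For the disjointness, suppose $[z]\in Z \cap H$, so $z_i = 0$ for $i>n$, and consider the projection $p:\bC^d\to\bC^n$ onto the first $n$ coordinates. The key observation is that condition (3) of Subsection \ref{se41}, applied to the specific $n$ points $p_1,\dots,p_n$, says exactly that any $s\in W$ vanishing at all of them must lie in $\langle t\rangle\cap W = 0$; hence $p\circ j:W\to\bC^n$ is injective, and by comparing dimensions it is an isomorphism. The orthogonality requirement then reduces to $\sum_{i=1}^n a_i z_i^2 = 0$ for \emph{every} $(a_1,\dots,a_n)\in\bC^n$, which forces $z_1=\cdots=z_n=0$, contradicting $[z]\in\bP^{d-1}$.

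For the dimension count, note that $j:W\to\bC^d$ is also injective (same argument applied to any choice of $n$ coordinates), so $Z$ is cut out in $\bP^{d-1}$ by $n = \dim W$ linearly independent quadrics, giving the lower bound $\dim Z \geq d-n-1$. Conversely, $H$ is a linear subspace of $\bP^{d-1}$ of dimension $n-1$, and $Z\cap H = \emptyset$ in $\bP^{d-1}$ forces $\dim Z + \dim H \leq d-2$, i.e.\ $\dim Z \leq d-n-1$. Equality follows. The only genuinely nontrivial step is the translation of condition (3) into the surjectivity of the coordinate projection of $j(W)$; the rest is routine linear algebra and the elementary dimension estimate for disjoint projective subvarieties.
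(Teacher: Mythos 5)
Your proof is correct and follows essentially the same route as the paper: both arguments use the general position condition (3) to show that the sections of $W$ separate the first $n$ coordinates (the paper does this by producing explicit sections $s_i$ with $j(s_i)$ supported on $\{i\}\cup\{n+1,\dots,d\}$, you by observing that $p\circ j:W\to\bC^n$ is an isomorphism, which is the same fact), and then conclude $z_1=\cdots=z_n=0$ for any $[z]\in Z\cap H$. The dimension count via ``$n$ quadrics give $\dim Z\geq d-n-1$'' and ``disjointness from the $(n-1)$-plane $H$ gives $\dim Z\leq d-n-1$'' is identical to the paper's.
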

\begin{proof} 
Notice that by the uniform position (see e.g.  \cite[Ch.3]{acgh}), we know that for all $i \in \{1,...,n\}$, there exists exactly  a section $s_i\in W$   such that  $s_i(p_j) =0$, $\forall j \in \{1,...,n\}$, $j \neq i$, $s_i(p_i) \neq 0$, $s_i(p_k) \neq 0$, $\forall k >n$, hence by Theorem \ref{diago} we get 
\begin{equation}
\label{bau}
\beta(s_i)=  a_{i,i}x^2_i+\sum_{j>n} a_{i,j}x^2_j,
\end{equation}
  with $a_{i,i} \neq 0$, $a_{i,j} \neq 0$, $\forall j >n$. 
Take $[z] = [z_1,...,z_n,0,...,0]\in H$ such that $\beta(s)(z \odot z) =0$, $\forall s \in W$.
Set $Q_i:= Q_{s_i},$ then $\tilde{\rho}(\pi^*(Q_i))(z \odot z)= a_{i,i} z_i^2 =0$ $\forall i$ if and only if  $z_i=0$, $\forall i =1,...,n$, which is impossible, since $[z] \in H$.  Therefore we have  $H\cap Z=\emptyset$
and then $\dim Z \leq d-1-n.$ Notice that $Z=\{[v]\in \bP^{d-1}: \tilde{\rho}(\pi^*(Q_i))(v \odot v)=0, \ i=1,\dots,n\}$, so $\dim Z = d-1-n$.


\end{proof}

We need to estimate the dimension of a linear space $\Pi\subset Z.$ Denote by $T$ the linear subspace of $V$ corresponding to $\Pi$. 

Consider the map $h:V\to V$,  $\pi (x_1,\dots, x_d)= (0,\dots,0,x_{n},\dots, x_d)$ 
The restriction of $h$ to $T$ is injective since $\Pi\subset Z$. 
By formula \eqref{bau} we can see $\beta(s_n)$ as a quadric in $h(V)$. 

We have the inclusion $$h(T)\subset \{v \in \pi(V) \ | \ \beta(s_n)(v \odot v) = \tilde{\rho}(\pi^*(Q_{n}))(v \odot v)=0\}.$$ Since
$\beta(s_{n})$ has rank $d-n+1$, $\dim(h(V)) = d-n+1$ and $h: T \la V$ is injective,  we get:

\begin{proposition}
\label{bound}
 Assume that  the morphism induced by $|M(-B)|$ is birational on its image.  Set $d = deg(M(-B))= 2g-2+r-k-b$, $n+1 = h^0(M(-B)) = h^0(M)= h^0(F \otimes \alpha^{-1}) +g-1+r-k$. 
With the above notation, let $\Pi$ be a linear subspace contained in $Z,$ and let $T$ be the corresponding  subspace  of $V$.  Then 
$$\dim T \leq  \frac{d-n+1}{2} = \frac{g+1-b}{2} - \frac{h^0(F \otimes \alpha^{-1})}{2}.$$ 
\end{proposition}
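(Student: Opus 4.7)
The plan is to finish the argument that is essentially set up in the paragraph preceding the statement: one produces a projection that is injective on $T$, so that bounding $\dim T$ becomes bounding the dimension of a linear subspace of the zero locus of a single nondegenerate quadric, where the classical isotropic subspace bound applies.

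More concretely, the first step is to define the projection $h:V\to V$, $h(x_1,\dots,x_d)=(0,\dots,0,x_n,x_{n+1},\dots,x_d)$, whose image $h(V)$ has dimension $d-n+1$. Its kernel is exactly the subspace $H$ from the previous lemma, and since $Z\cap H=\emptyset$, the restriction $h|_T$ is injective; in particular $\dim T=\dim h(T)$.

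Next, I would use the simultaneous diagonalization formula \eqref{bau} for $i=n$, namely $\beta(s_n)=a_{n,n}x_n^2+\sum_{j>n}a_{n,j}x_j^2$ with all coefficients nonzero, to view $\beta(s_n)$ as a quadratic form on $h(V)$ of full rank $d-n+1$. Since every $z\in T$ satisfies $\beta(s_n)(z\odot z)=0$ (because $\Pi\subset Z$) and $\beta(s_n)$ only involves the coordinates $x_n,\dots,x_d$, the image $h(T)$ is an isotropic linear subspace for the nondegenerate quadratic form $\beta(s_n)|_{h(V)}$.

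The last step is to invoke the standard fact that a nondegenerate quadratic form in $N$ variables admits no isotropic subspace of dimension larger than $\lfloor N/2\rfloor$. Applying this with $N=d-n+1$ gives
$$\dim T=\dim h(T)\leq \frac{d-n+1}{2},$$
and a direct computation using $d=2g-2+r-k-b$ and $n+1=h^0(F\otimes\alpha^{-1})+g-1+r-k$ yields $d-n+1=g+1-b-h^0(F\otimes\alpha^{-1})$, which rewrites the bound in the stated form. The only subtlety worth double-checking is that $h|_T$ really is injective, which relies entirely on the previous lemma $Z\cap H=\emptyset$; everything else is formal manipulation of the diagonalized quadrics produced by Theorem \ref{diago}.
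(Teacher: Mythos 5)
Your proposal is correct and is essentially the paper's own argument made explicit: the paper likewise introduces the projection $h$, deduces injectivity of $h|_T$ from $Z\cap H=\emptyset$, views $\beta(s_n)$ via \eqref{bau} as a rank-$(d-n+1)$ quadric on $h(V)$, and concludes with the isotropic-subspace bound for a nondegenerate quadratic form. The only tiny imprecision is your claim that $\ker h$ is \emph{exactly} $H$: in fact $\ker h=\{x_n=\dots=x_d=0\}$ is properly contained in (the cone over) $H=\{x_{n+1}=\dots=x_d=0\}$, but this containment together with $Z\cap H=\emptyset$ still gives the injectivity of $h|_T$ that you need.
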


\section{The  case $b+r \geq5$}

In this section we apply Proposition \ref{bound} in the case $b+r \geq 5$. We have the following 
\begin{theorem}
\label{gonality}
Let $[C, \alpha, R] \in {\mathcal R}^0_{g,r}$, where $C$ is a curve of genus $g>0$. Denote by $k$ its gonality and assume that $C$ has no involutions and that $g+r \geq k+3$. Denote by $Y$ a totally geodesic subvariety  contained in $P_{g,r}({\mathcal R}^0_{g,r})$ and passing through $P_{g,r}([C, \alpha, R])$. 
\begin{enumerate}
\item If  $ r >k+1$,  then $$\dim(Y) \leq \frac {3}{2}g - \frac{1}{2}  + r +k .$$
\item If $g +r \geq k+4$, $ r \leq k+1$, $b+r \geq 5$, then 
$$\dim(Y) \leq \frac {3}{2}g - \frac{1}{2} + \frac{b}{2} + r +k - \frac{h^0(F \otimes \alpha^{-1})}{2} \leq \frac {3}{2}g +2+  r +k. $$

\end{enumerate}

\end{theorem}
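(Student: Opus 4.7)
\medskip
\noindent\emph{Proof plan.} My starting point is to translate the hypothesis that $Y$ is totally geodesic in ${\mathcal A}^\delta_{g-1+r}$ and contained in $P_{g,r}({\mathcal R}^0_{g,r})$ into a linear constraint on the tangent directions. Since $P_{g,r}$ is an immersion at $[C,\alpha,R]$ and $Y$ lies in its image, $dP_{g,r}$ identifies $T_{P_{g,r}([C,\alpha,R])}Y$ with a subspace $T\subset H^1(T_C(-R))$ of dimension equal to $\dim Y$. The vanishing on $TY\odot TY$ of the second fundamental form of ${\mathcal R}^0_{g,r}\hookrightarrow{\mathcal A}^\delta_{g-1+r}$ then becomes, after dualising,
$$\rho_P(Q)(v\odot w)=0\qquad \forall\, Q\in I_2(K_C\otimes\alpha),\ \forall\, v,w\in T.$$
In particular, this must hold for every quadric $Q_s\in I_2(K_C\otimes\alpha)$ produced in Section~\ref{se41}.

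Next, I would apply Proposition~\ref{bound}, which requires that the morphism $f$ associated with $|M(-B)|$ be birational onto its image. By Theorem~\ref{diago} the family of quadrics $\beta(s)$, $s\in W$, is simultaneously diagonalised in the basis of $V^\ast$ dual to $\{v_1,\dots,v_d\}$, so $T\cap V$ corresponds to a linear subspace $\Pi\subset Z$; Proposition~\ref{bound} then gives
$$\dim(T\cap V)\leq \frac{g+1-b}{2}-\frac{h^0(F\otimes\alpha^{-1})}{2}.$$
To pass from $T\cap V$ to the whole of $T$, I use $\dim H^1(T_C(-R))=3g-3+2r$ and $\dim V=d=2g-2+r-k-b$, so the quotient $H^1(T_C(-R))/V$ has dimension $g-1+r+k+b$ and the composition $T\hookrightarrow H^1(T_C(-R))\twoheadrightarrow H^1(T_C(-R))/V$ yields
$$\dim Y=\dim T\leq \dim(T\cap V)+(g-1+r+k+b)\leq \frac{3}{2}g-\frac{1}{2}+\frac{b}{2}+r+k-\frac{h^0(F\otimes\alpha^{-1})}{2}.$$
In case (1), Proposition~\ref{r>k+1} gives $b=0$ and $h^0(F\otimes\alpha^{-1})=0$, which yields the first bound; in case (2), the weaker inequality $\leq\tfrac{3}{2}g+2+r+k$ follows from $b\leq 5$ (Corollary~\ref{cor3}) and $h^0(F\otimes\alpha^{-1})\geq 0$.

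The main obstacle will be verifying birationality of $f$. In case (1), $r>k+2$ is handled by Proposition~\ref{r>k+1}(2) (very ampleness), while $r=k+2$ requires Proposition~\ref{r>k+1}(3) together with the fact that $C$ is non-hyperelliptic, which is guaranteed by the absence of involutions. In case (2) with $b+r\geq 5$, Corollary~\ref{cor4} only ensures that $f$ has degree at most~$2$ on its image; however, a degree-$2$ map $C\to f(C)$ would produce either a hyperelliptic involution (when $f(C)\cong\mathbb{P}^1$) or the deck involution of a nontrivial double cover (when $g(f(C))\geq 1$), either of which contradicts the no-involutions hypothesis. Hence $f$ is birational in both cases, Proposition~\ref{bound} applies, and the asserted bounds follow.
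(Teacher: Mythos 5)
Your proof follows the paper's argument essentially verbatim: establish birationality of $|M(-B)|$ (via Corollaries \ref{cor1} and \ref{cor4}), apply Proposition \ref{bound} to bound $\dim(T\cap V)$, and combine this with $\dim V=d=2g-2+r-k-b$ through the same Grassmann/quotient dimension count, specialising with Proposition \ref{r>k+1} in case (1) and with $b\leq 5$ from Corollary \ref{cor3} in case (2). Your explicit use of the no-involutions hypothesis to upgrade ``degree at most $2$'' from Corollary \ref{cor4} to ``birational'' is precisely the step the paper leaves implicit, so this is a faithful and complete reconstruction.
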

\begin{proof}
Under the above assumptions in both cases, by Corollaries \ref{cor1}, \ref{cor4}, the linear system $|M(-B)|$ induces a birational map on its image (in the first case $B = \emptyset$). 
 Let $S$ be the  tangent space  of $Y$ at at $P_{g,r}([C, \alpha, R])$. Let $V$ be as above,  then $T=S\cap V$ is a linear subspace where all
the quadrics vanish, so by Proposition  \ref{bound} we get:  $\dim (S\cap V)\leq \frac{g+1-b}{2}- \frac{h^0(F \otimes \alpha^{-1})}{2}$.

Then $\dim S+ \dim V \leq 3g-3 + 2r +\dim (S\cap V)$, hence 
$$\dim S\leq 3g-3 + 2r -(2g-k-2+r-b)+\frac{g+1-b}{2}- \frac{h^0(F \otimes \alpha^{-1})}{2}= $$
$$=\frac {3}{2}g - \frac{1}{2} + \frac{b}{2} + r +k - \frac{h^0(F \otimes \alpha^{-1})}{2}.$$ 
In case (1) we have  $b = 0 = h^0(F \otimes \alpha^{-1})$, by Proposition \ref{r>k+1}. 
 So we get $\dim Y = \dim S \leq \frac {3}{2}g - \frac{1}{2}  + r +k $. 
In case (2),  we  get $\dim Y = \dim S \leq \frac {3}{2}g - \frac{1}{2} + \frac{b}{2} + r +k - \frac{h^0(F \otimes \alpha^{-1})}{2} $. Then we conclude, since  $b \leq 5$, by Corollary \ref{cor3}.
\end{proof}

\begin{remark}
In the second case of Theorem \ref{gonality},  since $b+r \geq 5$, we can apply Lemma \ref{lemma1}  and we have the following cases: 
\begin{enumerate} 
\item If $b \geq 2$, then $b+r =5$ and $h^0(F \otimes \alpha^{-1}) =0$, then $\dim(Y) \leq \frac {3}{2}g - \frac{1}{2} + \frac{b}{2} + r +k - \frac{h^0(F \otimes \alpha^{-1})}{2} = \frac {3}{2}g +2+ \frac{r}{2} +k$. 
\item If $b=1$, then $h^0(F \otimes \alpha^{-1}) =0$ , then $\dim(Y) \leq \frac {3}{2}g - \frac{1}{2} + \frac{b}{2} + r +k - \frac{h^0(F \otimes \alpha^{-1})}{2} = \frac {3}{2}g + r +k.$
\item If $b=0$, then $\dim(Y) \leq \frac {3}{2}g - \frac{1}{2} + r +k - \frac{h^0(F \otimes \alpha^{-1})}{2} \leq  \frac {3}{2}g - \frac{1}{2} + r +k  . $
\end{enumerate}
\end{remark}

\begin{corollary}
  \label{stima2}
  Let $Y$ be a germ of a totally geodesic submanifold of
  ${\mathcal A}^{\delta}_{g-1+r}$ which is contained in $P_{g,r}( {\mathcal R}^0_{g,r})$, with $g \geq 3$. Assume that there exists a point $[C, \alpha, R] \in Y$ such that $C$  has no involutions.
  \begin{enumerate}
  \item If $ g<2r-5$, then $\dim Y\leq 2g+r$ if $g$ even,  $\dim Y\leq 2g+r+1$ if $g$ is odd.
  \item If $g \geq 2r-5$, $r \geq 5$,  then $\dim Y\leq 2g+r+3 $.

  \end{enumerate}
  
\end{corollary}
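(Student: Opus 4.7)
The plan is to combine Theorem \ref{gonality} with the classical bound $k \leq [(g+3)/2]$ on the gonality of a smooth curve of genus $g$. The remainder of the argument is arithmetic, distinguishing the parity of $g$ and, in case (2), according to whether $r > k+1$ or $r \leq k+1$.

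For part (1), assume $g < 2r-5$, i.e.\ $r > (g+5)/2$. I first verify that this forces $r > k+1$. When $g$ is even, $k+1 \leq (g+4)/2 < (g+6)/2 \leq r$; when $g$ is odd, $(g+5)/2$ is an integer, so $r \geq (g+7)/2 \geq k+2$. The condition $g+r \geq k+3$ of Theorem \ref{gonality} is then automatic for $g \geq 3$. Applying Theorem \ref{gonality}(1) gives $\dim Y \leq (3/2)g - 1/2 + r + k$. Substituting $k \leq (g+2)/2$ for even $g$ yields $\dim Y \leq 2g+r+1/2$, hence $\dim Y \leq 2g+r$ by integrality; substituting $k \leq (g+3)/2$ for odd $g$ yields exactly $\dim Y \leq 2g+r+1$.

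For part (2), assume $g \geq 2r-5$ and $r \geq 5$. The condition $g+r \geq k+4$ of Theorem \ref{gonality} is immediate: from $k \leq (g+3)/2$ we get $g-k \geq (g-3)/2 \geq 0$ for $g \geq 3$, so $g+r-k \geq r \geq 5 \geq 4$. I then split into two subcases. If $r > k+1$, Theorem \ref{gonality}(1) applies and the computation from part (1) bounds $\dim Y$ by $2g+r$ or $2g+r+1$, both at most $2g+r+3$. If instead $r \leq k+1$, then $b+r \geq r \geq 5$ regardless of the value of the base-locus degree $b$, so Theorem \ref{gonality}(2) applies and gives $\dim Y \leq (3/2)g + 2 + r + k$. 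Substituting the gonality bound produces $\dim Y \leq 2g+r+3$ for $g$ even and $\dim Y \leq 2g+r+7/2$, hence $\dim Y \leq 2g+r+3$, for $g$ odd.

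The argument is essentially mechanical once Theorem \ref{gonality} is granted; the only delicate points are verifying $r > k+1$ in part (1) (where the parity of $g$ interacts with the strictness of the inequality $g < 2r-5$) and observing that in part (2) the assumption $r \geq 5$ makes $b+r \geq 5$ automatic, so that the improved bound of Theorem \ref{gonality}(2), rather than the weaker one coming from Theorem \ref{bau-bau}(3), is always available.
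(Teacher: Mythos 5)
Your proposal is correct and follows essentially the same route as the paper: both apply Theorem \ref{gonality} together with the gonality bound $k \leq [\frac{g+3}{2}]$, checking $r>k+1$ in case (1) and splitting case (2) according to whether $r>k+1$ or $r\leq k+1$ (where $r\geq 5$ forces $b+r\geq 5$). Your write-up merely makes the parity bookkeeping and the verification of the hypotheses $g+r\geq k+3$ (resp.\ $k+4$) more explicit than the paper does.
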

\begin{proof}

In case (1), by assumption we have $r > \frac{g+5}{2} = \frac{g+3}{2} +1 \geq [(g+3)/2]+1 \geq  k+1$. Since  $k \geq 3$, $r \geq  5$, hence  the result follows by Theorem \ref{gonality}, using  that $k\leq [(g+3)/2].$ 

In case (2), the condition $ r \geq 5$ implies $g + r \geq \frac{g+3}{2} + 4 \geq k+4$.  So if $r \leq k+1$, we use the estimate (2) in Theorem \ref{gonality}, and the inequality $k\leq [(g+3)/2]$ to conclude that  $\dim Y\leq 2g+r+3 $. If $r >k+1$, we use  the estimate (1) in Theorem \ref{gonality} and we get $\dim Y\leq 2g+r$ if $g$ is even, $\dim Y\leq 2g+r+1$ if $g$ is odd. 

\end{proof}

\section{The case $b+r \leq 4$}
Let us now consider the case $b+r \leq 4$. We have the following
\begin{theorem}
\label{gonality-general}
Let $[C, \alpha, R] \in {\mathcal R}^0_{g,r}$, where $C$ is a curve of genus $g>0$,  denote by $k$ its gonality. Assume that $C$ has no involutions. Denote by $Y$ a totally geodesic subvariety  contained in $P_{g,r}({\mathcal R}^0_{g,r})$ and passing through $P_{g,r}([C, \alpha, R])$. 
If $g+r \geq k+4+m$, $r \leq k+1$, $b+r=5-m$, $m \geq 1$, then 
$$\dim(Y) \leq\frac {3}{2}g  +  k + 2 - \frac{m}{2} + \frac{r}{2}.$$

\end{theorem}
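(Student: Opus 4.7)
The plan is to mirror exactly the proof of Theorem \ref{gonality}(2), since the target bound of Theorem \ref{gonality-general} is what that same formula yields after substituting the new relation $b+r=5-m$. First I would verify that, under the hypotheses $g+r \geq k+4+m$, $b+r = 5-m$, and $r \leq k+1$, the morphism $f \colon C \to \bP^n$ induced by the base-point-free linear system $|M(-B)|$ is birational onto its image. By Corollary \ref{general} (which builds on Proposition \ref{b+s_1+s_l}), $f$ is either birational or of degree $2$ onto its image. The no-involutions hypothesis on $C$ rules out the degree $2$ possibility: in characteristic zero, a degree $2$ morphism of a smooth curve onto its image factors as a Galois double cover of the normalization of the image, and the deck transformation is then a nontrivial involution of $C$, contradicting the hypothesis.

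With birationality of $f$ established, the quadric construction of Section \ref{se4} applies directly. Let $S$ denote the tangent space to $Y$ at $P_{g,r}([C, \alpha, R])$, let $V \subset H^1(T_C(-R))$ be the subspace spanned by the vectors $v_i$ attached to the points of a general divisor $D(t) \in |M(-B)|$, and set $T = S \cap V$. Since $T$ is contained in the common zero locus of the quadrics $Q_s$, Proposition \ref{bound} gives
$$\dim T \leq \frac{g+1-b}{2} - \frac{h^0(F \otimes \alpha^{-1})}{2}.$$
Combining this with $\dim V = 2g-2+r-k-b$ and with the inclusion $S + V \subseteq H^1(T_C(-R))$ (of ambient dimension $3g-3+2r$) yields, exactly as in the proof of Theorem \ref{gonality}(2),
$$\dim Y = \dim S \leq \frac{3}{2}g - \frac{1}{2} + \frac{b}{2} + r + k - \frac{h^0(F \otimes \alpha^{-1})}{2}.$$

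The final step is to substitute $b = 5 - m - r$ (from $b+r=5-m$) and drop the nonnegative term $h^0(F \otimes \alpha^{-1})/2$; the right-hand side then simplifies to $\frac{3}{2}g + k + 2 - \frac{m}{2} + \frac{r}{2} - \frac{h^0(F \otimes \alpha^{-1})}{2} \leq \frac{3}{2}g + k + 2 - \frac{m}{2} + \frac{r}{2}$, which is the claimed bound. The main obstacle I expect is the first step: ruling out the degree $2$ behavior of $|M(-B)|$ using only the no-involutions hypothesis, since for $b+r \leq 4$ the a priori geometry of the morphism is more intricate than for $b+r \geq 5$. Once birationality is in place, the remainder of the proof is a direct transport of the argument of Theorem \ref{gonality}(2), together with the arithmetic simplification coming from the equation $b + r = 5 - m$.
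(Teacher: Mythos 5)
Your proof follows the paper's argument essentially verbatim: Corollary \ref{general} (together with Corollary \ref{cor5'} for $m=1$) gives that $|M(-B)|$ is birational or of degree $2$, the quadric construction and Proposition \ref{bound} yield $\dim S \leq \frac{3}{2}g - \frac{1}{2} + \frac{b}{2} + r + k - \frac{h^0(F\otimes\alpha^{-1})}{2}$, and substituting $b+r = 5-m$ gives the stated bound. The only difference is that you spell out explicitly why the no-involutions hypothesis excludes the degree-$2$ case, a step the paper leaves implicit; this is correct and harmless.
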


\begin{proof} 

By Corollaries \ref{cor5'}, \ref{general}, the linear system $|M(-B)|$ is birational on its image. Let $S$ be the  tangent space  of $Y$ at at $P_{g,r}([C, \alpha, R])$. Let $V$ be as above,  then $T=S\cap V$ is a linear subspace where all
the quadrics vanish, so by Proposition  \ref{bound} we get:  
$$\dim S\leq 3g-3 + 2r -(2g-k-2+r-b)+\frac{g+1-b}{2}- \frac{h^0(F \otimes \alpha^{-1})}{2}= $$
$$=\frac {3}{2}g - \frac{1}{2} + \frac{b+r}{2} + \frac{r}{2} +k - \frac{h^0(F \otimes \alpha^{-1})}{2} = $$ 
$$=\frac {3}{2}g + 2 -\frac{m}{2} + \frac{r}{2} +k - \frac{h^0(F \otimes \alpha^{-1})}{2}.$$ 
\end{proof}

\begin{corollary}
  \label{stima-general}
  Let $Y$ be a germ of a totally geodesic submanifold of
  ${\mathcal A}^{\delta}_{g-1+r}$ which is contained in $P_{g,r}( {\mathcal R}^0_{g,0})$, with $g \geq 4m$,  $r=5-m$, $m \geq 1$. Assume that there exists a point $[C, \alpha, R] \in Y$ such that $C$  has no involutions.Then $\dim Y  \leq 2g -m + 5 $ if $g$ is even, $\dim Y  \leq 2g -m + 6 $ if $g$ is odd.

\end{corollary}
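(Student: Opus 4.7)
The plan is to apply Theorem \ref{gonality-general} at the point $[C,\alpha,R]$, with $r=5-m$ (which forces $b=0$), and then to substitute the standard gonality bound $k\leq\lfloor(g+3)/2\rfloor$.

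First I would verify the hypotheses of Theorem \ref{gonality-general}. Since $r=5-m$, the relation $b+r=5-m$ appearing in that theorem forces the base locus of $M=K_C\otimes\alpha\otimes F^{-1}$ to satisfy $b=0$. The inequality $r\leq k+1$ reduces to $k\geq 4-m$; since $C$ has no involutions it is non-hyperelliptic, so $k\geq 3$, and this handles all $m\geq 1$. The remaining hypothesis $g+r\geq k+4+m$ rewrites as $k\leq g+1-2m$, and combining with $k\leq\lfloor(g+3)/2\rfloor$ it suffices to check $(g+2)/2\leq g+1-2m$ when $g$ is even and $(g+3)/2\leq g+1-2m$ when $g$ is odd. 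The first is equivalent to $g\geq 4m$; the second to $g\geq 4m+1$, which is automatic from $g\geq 4m$ since $4m$ is even and $g$ is odd.

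Once the hypotheses are in place, Theorem \ref{gonality-general} yields, after substituting $r=5-m$,
\[
\dim Y \leq \frac{3}{2}g + k + 2 - \frac{m}{2} + \frac{r}{2} = \frac{3}{2}g + k + \frac{9}{2} - m.
\]
For $g$ even, inserting $k\leq (g+2)/2$ gives $\dim Y\leq 2g+\frac{11}{2}-m$, and integrality of $\dim Y$ rounds this down to $2g-m+5$. For $g$ odd, inserting $k\leq (g+3)/2$ gives $\dim Y\leq 2g-m+6$ directly.

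The argument is a direct application of Theorem \ref{gonality-general} combined with the parity-sensitive rounding of the gonality estimate; there is no essential new obstacle. The only place the geometric hypothesis on $C$ enters is through $k\geq 3$, which is needed to secure $r\leq k+1$.
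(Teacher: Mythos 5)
Your proof follows the paper's argument exactly: apply Theorem \ref{gonality-general} at $[C,\alpha,R]$ with $r=5-m$, then substitute $k\leq[\frac{g+3}{2}]$ and split on the parity of $g$ (your verification that $g+r\geq k+4+m$ reduces to $g\geq 4m$, and that $r\leq k+1$ follows from $k\geq 3$, is the same computation the paper compresses into one line, and you likewise use integrality to round $2g+\tfrac{11}{2}-m$ down in the even case). The only caveat, shared with the paper's own proof, is that matching the theorem's hypothesis $b+r=5-m$ against $r=5-m$ presupposes $b=0$; saying the theorem ``forces'' $b=0$ is not an argument for it, but since the paper makes the identical silent assumption, your attempt reproduces its reasoning faithfully.
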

\begin{proof} 
By assumption we have $g+r = g+5-m \geq   [\frac{g+3}{2}] + 4+m \geq k+4+m$, $r \leq 4 \leq k+1$,  hence Theorem \ref{gonality-general} gives the estimate: 
$ \dim(Y) \leq\frac {3}{2}g  + k - \frac{m}{2}+ 2+ \frac{r}{2} = \frac {3}{2}g  + k  - m+ \frac{9}{2}$, and we conclude using $k \leq [\frac{g+3}{2}].$

\end{proof}

In conclusion we have the following estimates: 
\begin{corollary}
  \label{stima-general-final}
  Let $Y$ be a germ of a totally geodesic submanifold of
  ${\mathcal A}^{\delta}_{g-1+r}$ which is contained in $P_{g,r}( {\mathcal R}^0_{g,0})$, with $r \leq 4$. Assume that there exists a point $[C, \alpha, R] \in Y$ such that $C$ has no involutions.\begin{enumerate}
\item If $r=4$, if $g \geq 4$, then $\dim Y \leq 2g+4$ if $g$ is even, $\dim Y \leq 2g+5$ if $g$ is odd. 
\item If $r=3$, if $g \geq 8$, then $\dim Y \leq 2g+3$ if $g$ is even, $\dim Y \leq 2g+4$ if $g$ is odd. 
\item  If $r=2$, if $g \geq 12$, then $\dim Y \leq 2g+2$ if $g$ is even, $\dim Y \leq 2g+3$ if $g$ is odd. 
\item If $r=1$, if $g \geq 16$, then $\dim Y \leq 2g+1$ if $g$ is even, $\dim Y \leq 2g+2$ if $g$ is odd. 
\item If $r=0$, if $g \geq 20$, then $\dim Y \leq 2g$ if $g$ is even, $\dim Y \leq 2g+1$ if $g$ is odd. 
\end{enumerate}
\end{corollary}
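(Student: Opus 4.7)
The plan is to derive this statement as a direct case-by-case application of Corollary \ref{stima-general}, with the internal parameter $m$ there identified as $m := 5 - r$. Corollary \ref{stima-general} requires $r = 5 - m$, $m \geq 1$, $g \geq 4m$, and the existence of a point $[C,\alpha,R]\in Y$ with $C$ having no involutions; these translate, for $r = 4, 3, 2, 1, 0$, into the five explicit thresholds $g \geq 4, 8, 12, 16, 20$ listed in items (1)--(5). The hypothesis that $C$ has no involutions carries over verbatim, so the input to Corollary \ref{stima-general} is valid in each case.

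The output of Corollary \ref{stima-general} is $\dim Y \leq 2g - m + 5$ for $g$ even and $\dim Y \leq 2g - m + 6$ for $g$ odd. Substituting $m = 5 - r$ converts these bounds into $\dim Y \leq 2g + r$ and $\dim Y \leq 2g + r + 1$ respectively, matching the stated conclusion in each of items (1)--(5). Thus the proof consists of running through $m = 1, 2, 3, 4, 5$ and reading off the numerics.

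The main point to verify is that nothing beyond Corollary \ref{stima-general} is needed: the birationality of $|M(-B)|$ supplied by Corollary \ref{general}, the diagonalization of the quadrics $\beta(s)$ from Theorem \ref{diago}, the linear-algebra bound of Proposition \ref{bound}, and the optimization against $k \leq \lfloor (g+3)/2 \rfloor$ are already absorbed into the proof of Corollary \ref{stima-general}. Consequently there is no real obstacle here; the present statement is a bookkeeping corollary whose sole purpose is to make the low-$r$ regime of Theorem \ref{mao-mao} fully explicit.
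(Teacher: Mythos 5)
Your proposal is correct and coincides with the paper's (implicit) proof: the corollary is stated immediately after Corollary \ref{stima-general} with no separate argument, precisely because it is obtained by running $m=1,\dots,5$ (i.e.\ $m=5-r$) through that corollary, so that $g\geq 4m$ becomes $g\geq 20-4r$ and $2g-m+5$ (resp.\ $2g-m+6$) becomes $2g+r$ (resp.\ $2g+r+1$). Nothing further is needed.
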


\section{Estimate with one quadric}
In the cases in which the assumptions of corollaries \ref{stima2},  \ref{stima-general} do not hold, we still have the estimates obtained in \cite{cfprym} and \cite{cframified} using only one quadric, which can be improved by Lemma \ref{lemma1} and Corollary \ref{cor3} in the case $r \leq k+1$. 

In fact, assume $g+r \geq k+3$, so that $h^0(M(-B)) \geq 2$.  With the above notation, choose $s \in W$, $t \in H^0(M(-B))$ satisfying conditions $(1)$ and $(2)$ in section 4.1 and such that $s(p_i) \neq 0$, $\forall i = 1,...,d$. By Theorem \ref{diago} we know that 
$$\beta(s)= \tilde{ \rho}(\pi^*(Q_s))= \lambda \sum_{i=1}^d a_ix_i^2,$$
where $a_i = s(p_i) \neq 0$, $\forall i$, hence 
${\rm{rank}} (\beta(s)) ={\rm{rank}}(\tilde{ \rho}(\pi^*(Q_s)))\geq d = 2g-2+r-k-b.$

Then 

\begin{theorem}
  \label{stima1}
  Assume that $[(C,\alpha,R)] \in {\mathcal R}^0_{g,r}$ where $C$ is a $k$-gonal curve of genus $g>0$ with $g+r\geq k+3$. Let $Y$ be a germ of a totally geodesic submanifold of
  ${\mathcal A}^{\delta}_{g-1+r}$ which is contained in $P_{g,r}( {\mathcal R}^0_{g,r})$ and passes through
  $P(C,\alpha,R)$.    Then 
   \begin{enumerate}
  \item If $r > k+1$,  then $\dim Y\leq 2g-2+\frac{3}{2}r + \frac{k}{2}$.
  \item If $r \leq k+1$, then:
  \begin{itemize}
   \item If $r \geq 4$, then   $\dim Y\leq 2g-\frac{3}{2} + \frac{k}{2} + \frac{3}{2} r$.
  \item If $r=3,$ then $\dim Y\leq 2g+\frac{7}{2} + \frac{k}{2}$. 
  \item If $r \leq 2$, then $\dim Y\leq 2g+\frac{1}{2} + \frac{k}{2} + \frac{3}{2}r.$
  \end{itemize}
   \end{enumerate}
\end{theorem}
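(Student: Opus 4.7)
The plan is to run the one-quadric version of the argument of Section \ref{se4}. Under the hypothesis $g+r\geq k+3$ we have $h^{0}(M(-B))=h^{0}(M)\geq 2$, so the space $W\subset H^{0}(M(-B))$ defined in Subsection \ref{se41} has positive dimension and the construction goes through. I would then pick a section $s\in W$ that does not vanish at any of the points $p_{1},\dots,p_{d}$ in $D(t)$; since $|M(-B)|$ is base-point free, this is an open non-empty condition on $W$. By Theorem \ref{diago}, the resulting quadric
\[
\beta(s)=\lambda\sum_{i=1}^{d}a_{i}\,x_{i}^{2}, \qquad a_{i}=s(p_{i})\neq 0,
\]
is then non-degenerate on the $d$-dimensional space $V=\langle v_{1},\dots,v_{d}\rangle$, where $d=\deg M(-B)=2g-2+r-k-b$.

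Next, let $S\subset H^{1}(T_{C}(-R))$ be the tangent space to $Y$ at $P(C,\alpha,R)$. Since $Y$ is totally geodesic inside the Prym locus, the dual second fundamental form $\rho_{P}(Q)$ vanishes on $S\otimes S$ for every $Q\in I_{2}(K_{C}\otimes\alpha)$. Applying this to $Q=Q_{s}$ and using \eqref{rho(Q)}, the restriction of $\beta(s)$ to $T:=S\cap V$ is identically zero. Because $\beta(s)$ is a non-degenerate quadratic form on the $d$-dimensional vector space $V$, any totally isotropic subspace has dimension at most $d/2$, so $\dim T\leq d/2$. Combining $\dim S+\dim V\leq \dim H^{1}(T_{C}(-R))+\dim(S\cap V)$ with $\dim H^{1}(T_{C}(-R))=3g-3+2r$ produces the master inequality
\[
\dim Y=\dim S\;\leq\;3g-3+2r-\frac{d}{2}\;=\;2g-2+\frac{3r+k+b}{2}.
\]

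The theorem then follows by plugging in the bounds on the base-locus degree $b$ obtained in Section 3. When $r>k+1$, Proposition \ref{r>k+1} gives $b=0$, yielding case (1). When $r\leq k+1$, Corollary \ref{cor3} gives $b\leq 1$ if $r\geq 4$, $b\leq 2$ if $r=3$, and $b\leq 5$ if $r\leq 2$, yielding the three sub-cases of (2). Each numerical bound is a direct substitution into the master inequality.

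The main obstacle is not in the linear algebra but in ensuring that the chosen $s$ produces a genuinely non-degenerate $\beta(s)$ on \emph{all} of $V$: this is exactly why the base locus $B$ had to be subtracted off so that $|M(-B)|$ is base-point free, and why the genericity conditions (1)--(3) of Subsection \ref{se41} are stated in terms of $M(-B)$ rather than $M$. Once this is in place, the improvements over \cite{cfprym,cframified} come entirely from the sharpened bounds on $b$ supplied by Corollary \ref{cor3}, which is where the real work of Section 3 is used.
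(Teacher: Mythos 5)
Your proposal is correct and follows essentially the same route as the paper: a single quadric $Q_s$ with $s(p_i)\neq 0$ for all $i$, the rank bound $\operatorname{rank}\beta(s)\geq d=2g-2+r-k-b$ from Theorem \ref{diago}, the resulting master inequality $\dim Y\leq 3g-3+2r-\frac{d}{2}$, and then substitution of the bounds on $b$ from Proposition \ref{r>k+1}, Corollary \ref{cor3} and Lemma \ref{lemma1}. The only cosmetic difference is that this one-quadric argument needs only conditions (1) and (2) of Subsection \ref{se41} (not the uniform-position condition (3), which would require birationality of $|M(-B)|$), exactly as in the paper.
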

  \begin{proof} 
  
  Since ${\rm{rank}} (\beta(s)) ={\rm{rank}}(\tilde{ \rho}(\pi^*(Q_s)))\geq d = 2g-2+r-k-b,$ we have 
  $$\dim Y\leq 3g-3 +2r -\frac{d}{2} = 2g-2 + \frac{3}{2}r + \frac{k}{2} +  \frac{b}{2}.$$
  In the first case $b=0$, so we get the same estimate as in \cite{cframified}. 
  If $r \leq k+1$, we use Corollary \ref{cor3}  in the case $r\geq 4$, and in the case $r=3$, since we have respectively $b \leq 1$, $b \leq 2$. If $r \leq 2$, by Lemma \ref{lemma1} we know that $b \leq 5$.

   \end{proof}
  
So, in the cases in which the assumptions of corollaries \ref{stima2},  \ref{stima-general} do not hold, we have the following 

 \begin{theorem}
  \label{stima8}
  Let $Y$ be a germ of a totally geodesic submanifold of
  ${\mathcal A}^{\delta}_{g-1+r}$ which is contained in $P_{g,r}( {\mathcal R}^0_{g,r})$.  
 
     \begin{enumerate}
   \item If $ g=1$, $r =4$,  then $\dim Y\leq 7$.
   \item If $ g=2$, $r =4$,  then $\dim Y\leq 9$.
 \item If $ g=3$, $r =4$,  then $\dim Y\leq 12$.
\item If $ 2 \leq g\leq 7$, $r=3$,  $\dim Y\leq \frac{9}{4} g +4$, if $g$ is even,  $\dim Y\leq \frac{9}{4} g +\frac{17}{4}$, if $g$ is odd.  
\item If $4 \leq g \leq 11$, $r =2$,  then $\dim Y\leq \frac{9}{4}g+4$, if $g$ is even, $\dim Y\leq \frac{9}{4} g +\frac{17}{4}$, if $g$ is odd. 
\item If $6 \leq g \leq 15$, $r =1$,  then $\dim Y\leq \frac{9}{4}g+\frac{5}{2}$, if $g$ is even, $\dim Y\leq \frac{9}{4} g +\frac{11}{4}$, if $g$ is odd. 
\item If $8 \leq g \leq 19$, $r =0$,  then $\dim Y\leq \frac{9}{4}g+1$, if $g$ is even, $\dim Y\leq \frac{9}{4} g +\frac{5}{4}$, if $g$ is odd.

  \end{enumerate}
  
\end{theorem}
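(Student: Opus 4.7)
The plan is to reduce Theorem \ref{stima8} directly to Theorem \ref{stima1}, using the universal gonality bound $k\leq [(g+3)/2]$ together with the sharpened bounds on the base locus degree $b$ proved in Section 3. So I would first pick a point $[C,\alpha,R]$ in the preimage of $Y$ lying in the immersion locus ${\mathcal R}^0_{g,r}$ (which is nonempty since $Y \subset P_{g,r}({\mathcal R}^0_{g,r})$), let $k$ be its gonality, and verify that in each of the seven cases $g+r \geq k+3$ holds, so that Theorem \ref{stima1} applies: using $k \leq [(g+3)/2]$, the inequality $g+r \geq k+3$ reduces to an elementary inequality in $g$ that is satisfied in each range.

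Next, for each $r$ I would identify which sub-case of Theorem \ref{stima1} applies and plug in the appropriate bound on $b$. For $r=4$ with $g=1,2$, one has $k=2$ forced by $g+r\geq k+3$, hence $r>k+1$, and case (1) of Theorem \ref{stima1} (where $b=0$ automatically) gives $\dim Y\leq 2g-2+\frac{3}{2}\cdot 4+1$, yielding the bounds $7$ and $9$. For $r=4$, $g=3$, if $k=2$ one reuses case (1); if $k=3$ then $r\leq k+1$ and $r\geq 4$, so Corollary \ref{cor3} gives $b\leq 1$ and the first bullet of case (2) produces $\dim Y\leq 12$. For $r=3$ with $2\leq g\leq 7$, Corollary \ref{cor3} gives $b\leq 2$ and one reads off the bound $2g+\frac{7}{2}+\frac{k}{2}$, into which one substitutes $k\leq \lfloor(g+3)/2\rfloor$ to obtain the stated $\frac{9}{4}g+4$ or $\frac{9}{4}g+\frac{17}{4}$ depending on parity. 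For $r=2$, Lemma \ref{lemma1} gives $b+r\leq 5$, so $b\leq 3$, and the third bullet of case (2) reads $\dim Y\leq 2g+\frac{1}{2}+\frac{k}{2}+3$, again yielding $\frac{9}{4}g+4$ or $\frac{9}{4}g+\frac{17}{4}$. For $r=1$ and $r=0$, Lemma \ref{lemma1} still allows $b\leq 5$, but the improved constants $\frac{3}{2}r$ in case (2) of Theorem \ref{stima1} drop the bound to $\frac{9}{4}g+\frac{5}{2}$ (resp.\ $\frac{11}{4}$) and $\frac{9}{4}g+1$ (resp.\ $\frac{5}{4}$).

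The only real work is bookkeeping: for each $(g,r)$ one must (a) check which of the two regimes $r>k+1$ or $r\leq k+1$ applies given the allowed range of $k$, (b) select the correct bound for $b$ from Corollary \ref{cor3} or Lemma \ref{lemma1}, and (c) substitute $k\leq \lfloor(g+3)/2\rfloor$ with the right parity. The upper bounds on $g$ in the hypothesis (e.g.\ $g\leq 7$ for $r=3$, $g\leq 19$ for $r=0$) are precisely the thresholds at which Corollaries \ref{stima2} and \ref{stima-general-final} become available and give better estimates; the lower bounds ensure $g+r\geq k+3$ for the gonality bound $k\leq [(g+3)/2]$. I do not anticipate any substantive obstacle: once Theorem \ref{stima1} is in hand, the proof is a case-by-case verification, the main point being to choose in each case the tightest of the three sub-bounds of Theorem \ref{stima1}(2) combined with the sharpest available bound on $b$.
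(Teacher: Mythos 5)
Your proposal is correct and follows essentially the same route as the paper: reduce to Theorem \ref{stima1} via $g+r\geq [\frac{g+3}{2}]+3\geq k+3$, split into the regimes $r>k+1$ versus $r\leq k+1$ (with the same $k=2$/$k=3$ subdivision for $r=4$, $g=3$ yielding $11$ and $12$), and substitute $k\leq[\frac{g+3}{2}]$ with the parity of $g$. The only nitpicks are cosmetic: for $g=2$ the gonality $k=2$ is forced by the genus rather than by $g+r\geq k+3$, and for $r=2$ the official bullet of Theorem \ref{stima1}(2) already absorbs the bound $b\leq 5$, so your remark that $b\leq 3$ is unnecessary (and would in fact give a slightly sharper constant than the one stated).
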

\begin{proof}
By assumption  $g + r \geq [\frac{g+3}{2}] + 3 \geq k +3$.   Moreover if $r \leq 3$, then $r  \leq k+1$, so we use the estimate (2) in Theorem \ref{stima1} (and the inequality $k\leq [\frac{g+3}{2}]$).  
For $r=4$, if $g \leq 2$, $r =4 >k+1 =3$, so we use the estimate (1) in Theorem \ref{stima1} (and the inequality $k\leq [\frac{g+3}{2}]$). If $r =4$ and $g =3$, if $k=2$ we use the estimate (1) in Theorem \ref{stima1} and we get $\dim Y \leq 11$. If $k =3$, we use the estimate (2) in Theorem \ref{stima1} and we obtain $\dim Y\leq 12$.

\end{proof}

Assume now that $r \geq 4$, hence ${\mathcal R}^0_{g,r} = {\mathcal R}_{g,r}$, since a global Prym-Torelli theorem holds for $r \geq 3$ (\cite{no1}, \cite{ikeda}).  Let $[C, \alpha, R] \in {\mathcal R}_{g,r}$ be a hyperelliptic curve, so $k=2$. 
We have the following 

\begin{proposition}
\label{k2r4}
Let $[C, \alpha, R] \in {\mathcal R}_{g,r}$, where $C$ is a hyperelliptic curve of genus $g$, $r \geq 4$.  Denote by $Y$ a totally geodesic subvariety  contained in $P_{g,r}({\mathcal R}_{g,r})$ and passing through $P_{g,r}([C, \alpha, R])$. 
\begin{enumerate}
\item If  $ r >4$,  then $\dim(Y) \leq \frac {3}{2}g + \frac{3}{2}  + r.$
\item If $r=4$ and either $\alpha \otimes F^{-1}$ is not effective, or  $\alpha \otimes F^{-1} \cong {\mathcal O}_C(D)$ with $D$  effective and $ {\mathcal O}_C(D) \not \cong F$, then  $\dim(Y) \leq \frac {3}{2}g + \frac{11}{2}.$
\end{enumerate}

\end{proposition}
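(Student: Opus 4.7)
The plan is to apply the same argument as in Theorem \ref{gonality}(1), with $F$ taken to be the (unique) hyperelliptic pencil on $C$, so that $k=2$. Under either hypothesis, $r \geq 4 > k+1$, so Proposition \ref{r>k+1}(1) supplies $b=0$ and $h^{0}(F\otimes\alpha^{-1})=0$. The only step that genuinely needs attention in the hyperelliptic setting is to verify that $|M|$, with $M=K_{C}\otimes\alpha\otimes F^{-1}$, defines a birational morphism onto its image, so that the construction of Section \ref{se41} and Proposition \ref{bound} are available.

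For case (1), $r > k+2 = 4$, so Proposition \ref{r>k+1}(2) gives directly that $|M|$ is very ample. For case (2), $r = k+2$, and one has to inspect the dichotomy of Proposition \ref{r>k+1}(3): the map $\phi_{|M|}$ can fail to be birational only if its generic fibres are divisors $D$ with $\mathcal{O}_C(D)\cong F$, that is, if $|M|$ factors through the hyperelliptic map, equivalently $M \cong F^{m}$ for some integer $m$. Using $K_{C}\cong F^{g-1}$, this translates to $\alpha\cong F^{m-g+2}$, and the degree equality $2g = 2m$ forces $m=g$, hence $\alpha\cong F^{2}$, equivalently $\alpha\otimes F^{-1}\cong F$. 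This is exactly the configuration ruled out by the hypothesis of (2), so $|M|$ is birational. (A short Riemann--Roch computation on $h^{0}(K_{C}\otimes\alpha\otimes F^{-2})$ makes the equivalence clean: no $D\in|F|$ satisfies $h^{0}(M(-D)) = h^{0}(M)-1$ unless $F^{2}\otimes\alpha^{-1}\cong\mathcal{O}_{C}$.)

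Once birationality is in hand, I would run the argument of Theorem \ref{gonality} verbatim. Set $d = \deg M = 2g+r-4$. The required linear independence of the vectors $v_{1},\ldots,v_{d}$, i.e.\ $\dim V = d$, reduces via $\sum p_{i}\sim M$ to the vanishing of $h^{1}(K_{C}\otimes\alpha\otimes F)$, which is immediate from Riemann--Roch as the Serre dual $\alpha^{-1}\otimes F^{-1}$ has degree $-r-2 < 0$. Proposition \ref{bound} then gives $\dim(S\cap V) \leq (g+1)/2$, and the standard dimension count
$$\dim Y \;\leq\; (3g-3+2r) - (2g+r-4) + \frac{g+1}{2} \;=\; \frac{3g+3}{2} + r$$
yields the bound in (1); specialising to $r=4$ produces $\frac{3}{2}g + \frac{11}{2}$ in (2).

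The main obstacle is conceptual rather than computational: $C$ does carry an involution (the hyperelliptic one), so the hypothesis \emph{``$C$ has no involutions''} of Theorem \ref{gonality} is violated. The key observation making this harmless is that the involution entering Section \ref{se41} is the covering involution $\sigma$ of $\pi\colon\tilde{C}\to C$, which is unrelated to the hyperelliptic involution of $C$; the hyperelliptic structure intervenes only through the single degenerate case $\alpha\cong F^{2}$, in which $|M|$ factors through $\phi$, and this case is precisely what is excluded by the hypothesis of (2).
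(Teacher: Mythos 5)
Your proof is correct and follows essentially the same route as the paper: establish birationality of $|M|$ via Proposition \ref{r>k+1} (very ampleness for $r>k+2$; for $r=4=k+2$ the only way birationality can fail is the excluded configuration $\alpha\otimes F^{-1}\cong F$), and then run the dimension count of Theorem \ref{gonality}(1) with $k=2$, $b=0$, $h^0(F\otimes\alpha^{-1})=0$. One small clarification on your closing remark: the reason the ``no involutions'' hypothesis of Theorem \ref{gonality} can be dropped here is not that the relevant involution is the covering involution $\sigma$ rather than the hyperelliptic one, but that in Theorem \ref{gonality} that hypothesis is used only to upgrade the conclusion ``degree at most $2$ onto its image'' of Corollaries \ref{cor1} and \ref{cor4} to actual birationality of the map given by $|M(-B)|$, and in the present situation birationality is secured directly from Proposition \ref{r>k+1}.
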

\proof 

Since $r\geq 4 > k+1 =3$, by Proposition \ref{r>k+1},  we have $b=0$ and $h^0(F \otimes \alpha^{-1}) = 0$. 
Moreover, if $r > 4= k+2$,  $|M|$ is very ample, hence the proof of Theorem \ref{gonality}, applies, so  we have the same estimate as in Theorem \ref{gonality}, (1): 
$\dim(Y) \leq \frac {3}{2}g - \frac{1}{2}  + r +2 = \frac {3}{2}g + \frac{3}{2}  + r.$
 
If $r=4 =k+2$, by Proposition  \ref{r>k+1}, (3), we know that if $\alpha \otimes F^{-1}$ is not effective, then $|M|$ is very ample, hence we have again the estimate  in Theorem \ref{gonality}, (1), so 
$\dim(Y) \leq \frac {3}{2}g - \frac{1}{2}  + r +2 = \frac {3}{2}g + \frac{11}{2}.$

Assume $r=4$ and $\alpha \otimes F^{-1} \cong {\mathcal O}_C(D)$ with $D$  effective. Then if $ {\mathcal O}_C(D) \not \cong F$, by Proposition  \ref{r>k+1}, (3), we know that $|M|$ gives a birational map that contracts $D$, so once again we have the estimate in Theorem \ref{gonality}, (1): $\dim(Y) \leq \frac {3}{2}g - \frac{1}{2}  + 4+2 = \frac {3}{2}g + \frac{11}{2}.$

\qed



%

\end{document}